\begin{document}

\begin{center}
\textbf{\LARGE B\'ezout's theorem for abelian varieties}
\bigskip

\textit{by}
\bigskip

{\Large Olivier Debarre\footnote{Supported by the European Research Council under the European Union's Horizon 2020 research and innovation programme (ERC-2020-SyG-854361-HyperK)} and Ben Moonen}
\end{center}
\vspace{6mm}

{\small 

\noindent
\begin{quoting}
\textbf{Abstract.} Let $X$, $Y$ be closed irreducible subvarieties  of an absolutely simple abelian variety of dimension~$g$  over a field. If  $\dim(X) + \dim(Y) \leq g$, we prove that the addition morphism $X\times Y \to X+Y$ is semismall. As a consequence, we deduce that if $\dim(X) + \dim(Y) \geq g$, the subvarieties $X$ and $ Y$ must meet (B\'ezout's theorem). If we drop the assumption that the abelian variety is  absolutely simple, we prove that B\'ezout's theorem still holds if $X$ satisfies a nondegeneracy condition. These results were previously known only in characteristic zero.

Our proof of the semismallness statement is based on the theory of perverse sheaves: using results of Kr\"amer and Weissauer, we prove  that for perverse sheaves $K$ supported on $X$, and~$L$ supported on~$Y$, the convolution product $K * L$ is again perverse.
\medskip

\noindent
\textit{AMS 2020 Mathematics Subject Classification:\/} 14K05 (14F06, 14F20)
\end{quoting}

} 
\vspace{2mm}

\frenchspacing

\section{Introduction}

Any two closed subvarieties of a projective space~$\bbP^g_k$ over a field~$k$, whose dimensions add up to at least~$g$, meet. The classical proof of this fundamental fact ultimately rests on Krull's Hauptidealsatz; it is thus independent of the characteristic of $k$. Consider now the analogous question for closed subvarieties of an abelian variety $A$ of dimension~$g$ over a field~$k$: do subvarieties of $A$ whose dimensions add up to at least~$g$ always meet? For this question to have a positive answer, it is clear that one needs to assume that $A$ is simple. As we shall discuss in Remark~\ref{rem:simpleAV}, this assumption is not strong enough, in general, and in many of our results we shall assume that $A$ is absolutely simple (i.e.\ the extension of~$A$ to an algebraic closure of~$k$ has no nonzero proper abelian subvarieties). 

Assuming that $A$ is absolutely simple, the above question does have a positive answer when $\charact(k) = 0$. For $k=\bbC$, this was first proved by Barth~\cite{Barth}. A simple proof for arbitrary fields~$k$ with $\charact(k) = 0$ was given by D.~Prasad~\cite{Prasad}, and independently also by one of us~\cite{Debarre-Conn} (the arguments are similar and we review them in Section~\ref{sec:background}). 

When $\charact(k) > 0$, the arguments used in~\cite{Prasad} and  \cite{Debarre-Conn} break down and Prasad asked whether the question still has an affirmative answer.  We prove that it does: this is a consequence (see Corollary~\ref{cor:codim}) of our first main result, which can be stated as follows (see Corollary~\ref{cor:dimZ}).

\begin{theorem}\label{thmA}
Let $A$ be a $g$-dimensional absolutely simple abelian variety over a field. Let $X_1,\ldots,X_r$ be closed subvarieties of~$A$ and define $Z \coloneqq X_1 + \cdots + X_r$. We have
\[
\dim(Z) = \min\biggl\{g,\sum_{i=1}^r\; \dim(X_i)\biggr\}\, .
\]  
\end{theorem}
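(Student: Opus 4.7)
The upper bound $\dim(Z) \le \min\bigl\{g,\sum_i \dim(X_i)\bigr\}$ is immediate: $Z$ is contained in $A$, and $Z$ is the image of $X_1 \times \cdots \times X_r$ under the $r$-fold addition morphism. The substantive content is the reverse inequality, and my plan is to establish it by induction on $r$, reducing everything to the semismallness of the two-variable addition map announced in the abstract. The case $r = 1$ is trivial. In the inductive step, set $W = X_1 + \cdots + X_{r-1}$. The inductive hypothesis applied to $W$ splits into two scenarios: either $W = A$, in which case $Z = A$ and we are done; or $\dim(W) = \sum_{i<r} \dim(X_i)$, in which case the problem reduces to proving the two-variable statement for $W$ and $X_r$.

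Thus the core claim is: for closed irreducible subvarieties $X, Y \subseteq A$, one has $\dim(X + Y) = \min\bigl\{g,\, \dim(X) + \dim(Y)\bigr\}$. When $\dim(X) + \dim(Y) \le g$, I would invoke the main technical theorem of the paper (stated in the abstract): the addition morphism $\mu \colon X \times Y \to X + Y$ is semismall. Since a semismall dominant morphism has finite generic fibers, this forces $\dim(X + Y) = \dim(X \times Y) = \dim(X) + \dim(Y)$.

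When $\dim(X) + \dim(Y) > g$, I would reduce to the boundary case $\dim(X) + \dim(Y) = g$ already handled. Since $X$ is irreducible, iterated intersection with general divisors in an ample linear system on $A$, combined with passage to an appropriate irreducible component at each step, produces a closed irreducible subvariety $X' \subseteq X$ of dimension $g - \dim(Y)$. Applying the previous case to $(X', Y)$ gives $X' + Y = A$, whence $X + Y = A$ as well.

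The only real obstacle is the semismallness assertion for $\mu$ when $\dim(X) + \dim(Y) \le g$; granted this, the rest of the argument is purely formal dimension-counting plus an elementary cutting-down procedure. This semismallness is precisely the main technical contribution of the paper, and it is where both the absolute simplicity hypothesis on $A$ and the perverse-sheaf techniques enter: the task is to show that the convolution $\IC_X * \IC_Y$ of intersection complexes remains perverse, relying on the results of Kr\"amer and Weissauer cited in the abstract. I do not expect any elementary workaround in positive characteristic, which is exactly why the perverse-sheaf input is indispensable.
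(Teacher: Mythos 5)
Your proposal is correct and follows essentially the same route as the paper: reduce to $r=2$, apply the semismallness of the addition map (Theorem~\ref{thm:semismall}) to conclude generic finiteness, and in the case $\dim(X)+\dim(Y)>g$ cut $X$ down to an irreducible subvariety of complementary dimension. The paper's Corollary~\ref{cor:dimZ} does this a bit more compactly by choosing irreducible $X_i'\subset X_i$ with $\dim(X_1')+\dim(X_2')=\min\{g,\dim(X_1)+\dim(X_2)\}$ at the outset, avoiding the explicit case split, but the content is identical.
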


We deduce Theorem~\ref{thmA} from the following result (see Theorem~\ref{thm:semismall}).

\begin{theorem}\label{thmB}
Let $A$ be an absolutely simple abelian variety of dimension~$g$ over a field. Let $X$ and~$Y$ be closed irreducible subvarieties of~$A$ and define $Z \coloneqq X+Y \subset A$. If $\dim(X) + \dim(Y) \leq g$, the addition morphism $X\times Y \to Z$ is semismall.
\end{theorem}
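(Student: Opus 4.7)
The plan is to translate the semismallness of the addition map $a\colon X\times Y\to Z$ into a statement about perverse sheaves, and then appeal to the convolution-perversity result announced in the abstract. Recall the standard criterion: a proper morphism $f\colon V\to W$ of irreducible varieties is semismall if and only if $Rf_*\IC_V$ is perverse on $W$. Since $A$ is proper, so is $a$, and I intend to apply this criterion to $V = X \times Y$ and $W = Z$.

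After base change to an algebraic closure (harmless, as semismallness is a geometric property), $X$, $Y$ and $X\times Y$ are geometrically irreducible, and the K\"unneth formula yields the identification $\IC_{X\times Y}\cong \IC_X\boxtimes \IC_Y$ of perverse sheaves on $X\times Y$. Let $i\colon Z\hookrightarrow A$ denote the closed immersion and $\mu\colon A\times A\to A$ the group law, so that $\mu|_{X\times Y}=i\circ a$. Viewing $\IC_X$ and $\IC_Y$ as perverse sheaves on $A$ (extended by zero from their supports), functoriality of direct image gives
\[
Ri_* Ra_*(\IC_X\boxtimes \IC_Y) \;=\; R\mu_*(\IC_X\boxtimes \IC_Y) \;=\; \IC_X\ast \IC_Y,
\]
where $\ast$ denotes convolution on $A$. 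Since $i$ is a closed immersion, $Ri_*$ is t-exact, so $Ra_*\IC_{X\times Y}$ is perverse on $Z$ if and only if $\IC_X\ast\IC_Y$ is perverse on $A$.

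It therefore suffices to prove that $\IC_X\ast\IC_Y$ is perverse. This is precisely the content announced in the abstract: on an absolutely simple abelian variety, the convolution of two perverse sheaves supported on proper closed subvarieties is perverse. The strategy, building on the work of Kr\"amer and Weissauer, is to study the convolution tensor structure on the bounded derived category of $A$ modulo the thick subcategory of ``negligible'' objects, and to show that on an absolutely simple $A$ the negligible perverse sheaves form a tightly controlled class that does not obstruct perversity of $\IC_X\ast\IC_Y$. The dimension hypothesis $\dim X + \dim Y \leq g$ enters at this stage to guarantee that the a priori perverse degrees of $\IC_X \ast \IC_Y$ are sharply bounded, forcing collapse onto the perverse heart once the negligible contributions are shown to vanish.

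The main obstacle is the passage to positive characteristic in this final step. Over $\bbC$ the Kr\"amer--Weissauer theory rests on generic vanishing and the Tannakian formalism applied to the convolution category; in the $\ell$-adic setting over arbitrary fields these inputs must be re-established, and the structure of the subcategory of negligible perverse sheaves must be understood concretely enough to exclude the contributions supported on translates of proper abelian subvarieties. The role of the absolute simplicity hypothesis is that any such proper abelian subvariety of $A_{\kbar}$ would support nontrivial perverse sheaves whose convolutions could leave the perverse heart---these same obstructions also underlie the failure of B\'ezout-type statements on non-simple abelian varieties alluded to in the introduction.
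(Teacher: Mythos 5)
Your overall strategy---translate semismallness of the addition map into perversity of $\IC_X*\IC_Y$, then prove the latter via Kr\"amer--Weissauer type arguments---matches the paper's approach, and the computation $Ri_*Ra_*\IC_{X\times Y}\cong\IC_X*\IC_Y$ is correct. But there is a genuine gap in the first step: you invoke the equivalence ``$f$ semismall $\iff$ $Rf_*\IC_V$ perverse'' for an arbitrary proper morphism of irreducible varieties, and this is only a clean two-way criterion when the \emph{source} $V$ is smooth (which is how de Cataldo--Migliorini state it). Here $V=X\times Y$ is a product of arbitrary closed irreducible subvarieties of $A$ and may well be singular. The forward implication (semismall $\Rightarrow$ perverse) holds for singular $V$ by the support conditions on $\IC_V$, but the converse---the direction you actually need---is the problematic one: if a fibre $F_z$ of dimension $n$ lies entirely in the singular locus of $X\times Y$, there is no a priori reason for $H^{2n-\dim(X\times Y)}(F_z,\IC_{X\times Y}|_{F_z})$ to be nonzero, so perversity of $Ra_*\IC_{X\times Y}$ does not directly force the dimension bound on $\{z:\dim F_z\geq n\}$.

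Overcoming this is precisely what most of the paper's Section~4 is doing. They choose affine regular open subsets $U_X\subset X$, $U_Y\subset Y$ with closed complements $\Delta_X$, $\Delta_Y$, introduce the boundary perverse sheaves $K=\IC_X|_{\Delta_X}[-1]$ and $L=\IC_Y|_{\Delta_Y}[-1]$, and apply the convolution-perversity result (their Proposition~3.11) not just to $\IC_X*\IC_Y$ but also to $K*\IC_Y$, $\IC_X*L$ and $K*L$, which control the cohomology of $F_z\cap\Delta$. Combined with a nonvanishing lemma for top compactly supported cohomology of $F_z\cap(U_X\times U_Y)$ and an induction on the dimension of the subvarieties (to handle fibres concentrated in $\Delta_X\times Y$ or $X\times\Delta_Y$), this yields semismallness. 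None of this machinery is present in your proposal; the reduction to perversity of a single convolution is too coarse for singular $X\times Y$. Your sketch of why $\IC_X*\IC_Y$ is perverse (via ``negligible objects'') is also only a gesture at the argument, but the missing semismallness criterion is the more serious issue.
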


Recall that if $S$ and~$T$ are varieties over a field, a proper morphism $f \colon S \to T$ is said to be semismall if for every integer~$n\geq 0$, the dimension of the closed subset $T_n \coloneqq \bigl\{t \in T \bigm| \dim(f^{-1}(t)) \geq n\bigr\} $ of $ T$ is at most $\dim(S) -2n$. If $S$ is irreducible and $f$ is semismall and surjective, then $f$ is generically finite.

\subsection{}
As there is a strong connection between semismallness of a morphism and the theory of perverse sheaves (see for instance \cite[Section~4.2]{dCM}), it may not come as a surprise that our proof of Theorem~\ref{thmB} relies on a result about perverse sheaves. More precisely, suppose we have an absolutely simple abelian variety~$A$ of dimension~$g$ and closed irreducible subvarieties $X$, $Y \subset A$ with $\dim(X) + \dim(Y) \leq g$. We prove, under mild technical assumptions, that for perverse sheaves~$K$ supported on~$X$, and~$L$ supported on~$Y$, the convolution product $K * L$ is again perverse; see Proposition~\ref{prop:K*Lperv}. It then still requires some work to deduce Theorem~\ref{thmB}, but the main idea for this is quite simple; we explain it in~\ref{subsec:ideaproof}.

The proof of Proposition~\ref{prop:K*Lperv} is based on the work of Kr\"amer and Weissauer in \cite{KW} and~\cite{WeissFF}. For our applications, we need to refine and extend several of their results, especially when we work in characteristic~$p$. One aspect of this is that our result is valid without any assumptions on the semisimplicity of the perverse sheaves involved. Also, in characteristic~$p$, our result is valid whenever the perverse sheaves are defined over some finitely generated field extension of~$\bbF_p$. To obtain these results, we make use of `spreading out' techniques as in the work of Drinfeld~\cite{Drinfeld} and Weissauer~\cite{WeissFF}. A convenient setting for this is the theory of relative perverse sheaves that was recently developed by Hansen and Scholze~\cite{HansenScholze}.

\subsection{}
The results about perverse sheaves that we need are discussed in Section~\ref{sec:ConvPerv}, and the proofs of Theorems~\ref{thmA} and~\ref{thmB} are given in Section~\ref{sec:AddSimpleAV}. Finally, in Section~\ref{sec:AddArbitraryAV} we give a partial extension of our main results to arbitrary abelian varieties. In particular, we show that if we drop the assumption that $A$ is absolutely simple, Theorem~\ref{thmA} is still valid if at least $r-1$ of the subvarieties~$X_i$ are geometrically nondegenerate in the sense of Ran's paper~\cite{Ran}; see Corollary~\ref{cor:X1Xr}.

\subsection{Notation and conventions}\label{subsec:NotConv}
\begin{enumerate}
\item By a variety over a field~$k$, we mean a reduced and separated $k$-scheme of finite type.

\item\label{conv:subvar} Throughout, $A$ denotes an abelian variety over a field~$k$, with addition map $\sigma\colon A \times A \to A$ and origin $0 \in A(k)$. 

\item If $X$, $Y \subset A$ are closed subvarieties, we write $X+Y$ for the image of $X \times Y$ under~$\sigma$. Similarly for the sum of more than two subvarieties.

\item\label{conv:tgtbunA} The tangent bundle of~$A$ is identified with $A \times T_{A,0}$.

\item If $a\in A(k)$, we let $t_a \colon A\to A$ be the morphism given by translation by~$a$. If $X \subset A$ is a closed subvariety, we denote by $\Stab(X) \subset A$ the stabiliser of~$X$, that is, the subgroup scheme of~$A$ consisting of the points $a \in A$ such that $t_a(X) = X$. If $k$ is perfect, the reduced identity component of~$\Stab(X)$ is an abelian subvariety of~$A$. 

\item If $X$ is a noetherian scheme, we let $\dim(X)$ denote the maximum of the dimensions of the irreducible components of~$X$.
\end{enumerate}

\section{Background and some elementary cases}\label{sec:background}

\subsection{}
We first give a brief sketch, following~\cite{Prasad}, of the proof of Theorem~\ref{thmA} for a simple complex abelian variety~$A$ of dimension $g$. The essential case to consider is when we have closed irreducible subvarieties $X$, $Y \subset A$ with $\dim(X) + \dim(Y) \leq g $; in that case, we want to show that the addition map $X\times Y \to A$ is generically finite onto its image $Z \coloneqq X+Y$. If this is not true, all fibres of the map $X \times Y \to Z$ have positive dimension and $Z \subsetneq A$. Choose a curve $F \subset X\times Y$ such that $\sigma(F) = \{z\}$ for some regular point~$z$ of~$Z$. The projection map $\pr_X \colon X\times Y \to X$ induces an isomorphism $F \isomarrow C \coloneqq \pr_X(F)$, with inverse given by $x \mapsto (x,z-x)$. By construction, $T_{X,x} + T_{Y,z-x} \subset T_{Z,z} \subsetneq T_{A,0}$ for all $(x,z-x) \in F$ (note convention~\ref{subsec:NotConv}\ref{conv:tgtbunA}). Hence, we have inclusions
\[
T_{C,x} \subset T_{X,x} \subset T_{Z,z} \subsetneq T_{A,0}
\] 
for all $x \in C$. Finally, one shows (see \cite[Lemma~1]{Prasad}) that in a simple complex abelian variety~$A$, one cannot have a regular (locally closed) curve $C \subset A$ and a subspace $V \subsetneq T_{A,0}$ such that $T_{C,x} \subset V$ for all $x \in C$.

\subsection{}
It was pointed out by Abramovich that the last step in the proof (that is, \cite[Lemma~1]{Prasad}) breaks down over fields of positive characteristic. (In~\cite{Debarre-Conn}, the property described in~(1.9) of that paper is no longer true in positive characteristic.) More specifically: let $k$  be an algebraically closed field of  characteristic $p>0$ and let~$A$ be a simple abelian variety over~$k$. Choose a nontrivial infinitesimal subgroup scheme $N \subset A$ (for instance, any nontrivial local subgroup scheme of the $p$-kernel group scheme~$A[p]$). The quotient morphism $q\colon A \to B = A/N$ is then a purely inseparable isogeny, and $T_{N,0} = \Ker(T_{q,0}\colon T_{A,0} \to T_{B,0})$ is nonzero. If $C \subset A$ is a regular curve passing through the origin such that $T_{C,0} \not\subset T_{N,0}$, its image $q(C) \subset B$ is a generically regular curve  and its regular locus $C^\prime\subset B $ is a curve that has the property that $T_{C^\prime,x} \subset \Image(T_{q,0}) \subsetneq T_{B,0}$ for all $x \in C^\prime$.

\subsection{}
Some low dimensional cases of Theorem~\ref{thmA} can be handled by elementary arguments (in arbitrary characteristic). These arguments will not be used in the rest of the paper and are only included to illustrate how one may try to argue. We consider the situation where $k$ is algebraically closed, $A$ is a simple  abelian variety of dimension $g$ over~$k$, and $X$, $Y \subset A$ are closed irreducible subvarieties, of respective dimensions $d$ and~$e$. We assume $d+e\leq g$ and we set $Z \coloneqq X+Y$. The goal is to show that $\dim(Z) = d+e$. We will do that when $d=1$, or when $d=2$ and either $e\in\{2,3\}$ or~$g>2e$. Note that the stabiliser of any closed subvariety $W \subsetneq A$ is finite, because $A$ is simple.
\medskip

\noindent
(1) If $\dim(Z) = e$, then $X + Y = a+Y$ for every $a\in X(k)$, hence $X-a \subset \Stab(Y)$. This is possible only if $X$ is a point. In particular, if $\dim(X) = 1$ then $\dim(Z) = e+1$.
\medskip

\noindent
(2) Assume $d = 2$ and $\dim(Z) = e+1$. For $z \in Z(k)$, define $F_z \coloneqq \sigma^{-1}(z)$ and let $C_z \coloneqq\pr_X(F_z)$, which is a closed subscheme of~$X$. The projection map induces an isomorphism $F_z \isomarrow C_z$ whose inverse is given on points by $w \mapsto (w,z-w)$.

Consider the morphism $\psi \colon X\times Y \times Y \to Z \times Z$ given by $\psi(x,y_1,y_2) = (x+y_1,x+y_2)$, and let $W \subset Z\times Z$ be its image. For $z \in Z(k)$, we have  $\dim(C_z) \geq 1$, hence $C_z+Y=Z$  by case~(1), and $\{z\}\times Z \subset W$. This implies that $\psi$ is surjective; therefore, for every $z_1, z_2 \in Z(k)$, there exist points $x\in X(k)$ and $y_1$, $y_2 \in Y(k)$ with $z_i = x+y_i$. It follows that $Z-Z \subset Y-Y$; but $Z-Z = X-X + Y-Y$, so we get   $X-X \subset \Stab(Y-Y)$, which is possible only if $Y-Y = A$. In particular, if $d = 2$ and $e < \tfrac{g}{2}$, we have $\dim(Z) = e+2$.
\medskip

\noindent
(3) Assume $d = 2$ and $e \in\{2,3\}$. Suppose $\dim(Z) = e+1$. The argument in case~(2) shows that $Y-Y = A$, and either $(e,g) = (2,4)$ or $e=3$ and $g\in\{5,6\}$. The fibre of the difference map $\delta \colon Y \times Y \to A$ above a point $a\in A(k)$ is isomorphic to $Y \cap (a+Y)$. Let  $L \subset A$ be the closed subset of~$A$ where this fibre has dimension at least~$e-1$. We have
\[
2e = \dim(Y\times Y)\geq \dim\bigl(\delta^{-1}(L)\bigr)\geq \dim (L)+e-1\, ,
\]
so that $\dim(L)\leq 1+e< d+e \leq g$ and $L\subsetneq A$. Therefore, $\delta^{-1}(L) \subsetneq Y\times Y$ and the inequality we have found can be improved to $\dim(L)\le e$.

Let $C$, $C^\prime \subset X$ be curves such that $C^\prime$ is not a translate of~$C$. By case~(1), we have $C+Y =  C^\prime + Y=Z$ and the surjective addition maps $C\times Y \to Z$ and $C^\prime \times Y \to Z$ are generically finite. Set $V \coloneqq(C\times Y) \times_Z (C^\prime \times Y)$; then $\dim(V) \geq e+1$. Let $W \subset C\times Y \times C^\prime$ be the image of~$V$ under projection onto the first three factors. Then $V \isomarrow W$, hence $\dim(W) \geq e+1$. Consider the projection map $\pi \colon W \to C\times C^\prime$. The fibre over a point $(c,c^\prime)$ is $e$-dimensional if and only if $c-c^\prime \in \Stab(Y)$. By our assumption on $C$ and~$C^\prime$, this happens at only finitely many points $(c,c^\prime)$. Hence $\pi$ is surjective and all its fibres have dimension at least~$e-1$. This implies that $\bigl((c-c^\prime) + Y\bigr) \cap Y$ has dimension at least~$e-1$ for all $(c,c^\prime) \in C\times C^\prime$, that is, $C-C^\prime \subset L$. As this holds for arbitrary curves $C$, $C^\prime \subset X$ with $C \not\cong C^\prime$, we get $X-X \subset L$, hence $\dim(X-X)\leq \dim(L)\leq e$.

Applying~(2) to the subvarieties $X$ and~$-X$, we find that $\dim(X-X) \leq 3$ is possible only if $X-X = A$; but then, $\dim(X-X)\leq e$ implies $g\leq e$, which in the cases that we are considering does not hold. This settles the case where $d = 2$ and $e \in\{2,3\}$.
\medskip

\noindent
It seems  difficult to obtain a proof of Theorem~\ref{thmA} in general using such elementary arguments. Already the cases where $(d,e) = (2,4)$ and $g\in \{6,7,8\}$ pose  a challenge.

\section{Convolution product and perverse sheaves}\label{sec:ConvPerv}

The proofs of the results that will be given in the next two sections are based on Proposition~\ref{prop:K*Lperv} below, which concerns the convolution product of perverse sheaves on abelian varieties. The key ideas for this proposition come from the work of Kr\"amer and Weissauer in \cite{KW} and~\cite{WeissFF}. However, we shall need that assertions about perverse sheaves are true in greater generality than stated in their work. Notably, this concerns assumptions on the base field over which we work, and assumptions on the (semi)simplicity of the perverse sheaves involved.

\subsection{}
Throughout the discussion, $\ell$ denotes a prime number which is assumed to be invertible on all schemes that we consider. We shall almost exclusively work with $\Qlbar$ as a coefficient ring but, in Lemma~\ref{lem:chi=chi} and Proposition~\ref{prop:chiPnonneg}, we also need to consider other choices; so for now, let $\Lambda$ be any commutative Hausdorff topological ring (or any condensed commutative ring). 

If $X$ is any quasi-compact quasi-separated scheme on which $\ell$ is invertible, we write $\uD_{\cstr}(X,\Lambda)$ for the triangulated category of constructible complexes of $\Lambda$-modules on the pro-\'etale site~$X_\proet$ of~$X$, as in~\cite{HemoEtAl}. For $E$ an algebraic field extension of~$\bbQ_\ell$ and $\Lambda \in \{E,\cO_E\}$, this is the same as the category defined in \cite[Definition~6.8.8]{BS}. Every object $K$ of $\uD_{\cstr}(X,\Qlbar)$ is bounded and the cohomology sheaves $\cH^i(K)$ are constructible. By the support of~$K$, we mean the union of the supports of the~$\cH^i(K)$.

Let $k$ be a field with algebraic closure~$\kbar$, and let $X$ be a quasi-projective scheme over~$k$. For $K \in \uD_{\cstr}(X,\Lambda)$, all cohomology groups $H^i(X_{\kbar},K)$, as well as the cohomology groups with compact support $H^i_{\comp}(X_{\kbar},K)$ are $\Lambda$-modules of finite type, which are nonzero only for finitely many integers~$i$. If $\Lambda$ is a field, define
\[
\chi(K) \coloneqq \sum_{i\in \bbZ}\; (-1)^i \dim_\Lambda\bigl(H^i(X_{\kbar},K)\bigr)\, ,\qquad
\chi_{\comp}(K) \coloneqq \sum_{i\in \bbZ}\; (-1)^i \dim_\Lambda\bigl(H^i_{\comp}(X_{\kbar},K)\bigr)\, .
\]
If $K \to L \to M \to$ is a distinguished triangle, $\chi(K) + \chi(M) = \chi(L)$ and $\chi_{\comp}(K) + \chi_{\comp}(M) = \chi_{\comp}(L)$.

\subsection{}
Let $A$ be an abelian variety of dimension $g$ over a field~$k$, with addition map $\sigma \colon A \times A \to A$. For $K$, $L \in \uD_{\cstr}(A,\Qlbar)$, we write $K * L \coloneqq R\sigma_*(K\boxtimes L)$ for their convolution product. (The external tensor product is defined by $K \boxtimes L \coloneqq \pr_1^*(K) \otimes^{\mathrm{L}} \pr_2^*(L)$, where $\pr_1$, $\pr_2 \colon A\times A \to A$ are the projection maps.) The convolution product makes~$\uD_{\cstr}(A,\Qlbar)$ into a $\Qlbar$-linear rigid symmetric monoidal category. The identity object is the skyscraper sheaf~$\IC_{\{0\}}$ at the origin (which is the intersection complex of $\{0\} \subset A$, as defined in \ref{subsec:PervNotat}; whence the notation). The dual of an object~$K$ is $(-\id_X)^* \bbD(K)$, where~$\bbD$ is the Verdier duality functor. We have the relation
\begin{equation}\label{eq:D(K*L)}
\bbD(K*L) \cong \bbD(K) * \bbD(L)\, .
\end{equation}

\subsection{}\label{subsec:PervNotat}
Let $\Perv(A) \subset \uD_{\cstr}(A,\Qlbar)$ be the subcategory of perverse sheaves. If $X$ is a closed subvariety of~$A$, we identify $\Perv(X)$ with the category of perverse sheaves on~$A$ that have support in~$X$. The intersection complex of~$X$, which is a selfdual simple perverse sheaf, is defined as $\IC_X = j_{!*}\Qlbar[d]$, where $d = \dim(X)$ and where $j\colon U \hookrightarrow X$ is the inclusion of a dense open subset of~$X$ such that $(U_{\kbar})_{\red}$ is smooth over~$\kbar$. (By \cite[Corollaire~17.15.13]{EGAIV}, there exists such an~$U$, and it is a basic fact that $\IC_X$ is independent of which~$U$ with this property we choose.)

If $0 \to P^\prime \to P \to P^\pprime \to 0$ is a short exact sequence in $\Perv(A)$, there is a distinguished triangle $P^\prime \to P \to P^\pprime \to$ in $\uD_{\cstr}(A,\Qlbar)$. Hence the Euler characteristic~$\chi(P)$ is additive in short exact sequences. For a perverse sheaf~$P$ which is supported on a subset of dimension~$d$, we have $H^i(A_{\kbar},P) = 0$ for all $i>d$. 

A perverse sheaf~$P$ on~$A$ has a socle filtration
\[
\soc_\bullet(P):\qquad 0 = \soc_0(P) \subset \soc_1(P) \subset \soc_2(P) \subset \cdots \subset \soc_t(P) = P
\]
(for some $t\geq 0$), which is defined inductively by the rule that $\soc_{i+1}(P)/\soc_i(P) \subset P/\soc_i(P)$ is the maximal semisimple subobject of~$P/\soc_i(P)$. We refer to the number~$t$ as the socle length of~$P$.

\subsection{}\label{subsec:LpsiDef}
If $\psi\colon \pi_1(A,0) \to \ol\bbQ^\times_\ell$ is a continuous character, we denote by~$\bbL_\psi$ the corresponding smooth $\Qlbar$-sheaf of rank~$1$ on~$A$. We have an isomorphism
\begin{equation}\label{eq:DLpsi}
\bbD(\bbL_\psi) \cong \bbL_{\psi^{-1}}[2g]\, .
\end{equation}
With the notation $K_\psi = K \otimes \bbL_\psi$, we have
\begin{equation}\label{eq:Kpsi*Lpsi}
K_\psi * L_\psi \cong (K*L)_\psi
\end{equation}
for all $K$ and~$L$ in $\uD_{\cstr}(A,\Qlbar)$.

\subsection{}
One of the key points in the theory is that, for every perverse sheaf~$P$ on an abelian variety~$A$ over a field~$k$, we have $\chi(P) \geq 0$. For $k = \bbC$, this was proven in \cite[Corollary~1.4]{FranKapr}, and the result for arbitrary base fields of characteristic~$0$ can be reduced to that case. Over fields of positive characteristic, the result was proven in~\cite{WeissFF} under the assumption that $P$ can be defined over a finitely generated field. Using T.~Saito's results on characteristic cycles of perverse sheaves,  essentially the same argument as in~\cite{FranKapr} gives the result in full generality; see Proposition~\ref{prop:chiPnonneg} below. Saito's results in~\cite{Saito} are stated for perverse sheaves with coefficients in a finite ring. We can apply these results because of the following fact, here stated only in the setting in which we need it.

\begin{lemma}\label{lem:chi=chi}
Let $X$ be a quasi-projective variety over an algebraically closed field~$k$. Let $E$ be a finite field extension of~$\bbQ_\ell$ with ring of integers~$\cO_E$ and residue field~$\kappa_E$. For $K \in \uD_{\cstr}(X,\cO_E)$, let $K[\frac{1}{\ell}]$ denote its image in~$\uD_{\cstr}(X,\Qlbar)$  and let $\bar{K}$ denote the image of~$K$ in~$\uD_{\cstr}(X,\kappa_E)$. Then $\chi_{\comp}\bigl(K[\frac{1}{\ell}]\bigr) = \chi_{\comp}(\bar{K})$.
\end{lemma}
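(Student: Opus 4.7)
The plan is to reduce the lemma to a general homological fact about perfect complexes over a discrete valuation ring. Because $k$ is algebraically closed, $X$ is quasi-projective and $K$ is constructible, the complex of compactly supported cohomology
\[
R\Gamma_{\comp}(X,K) \in \uD(\cO_E)
\]
is a \emph{perfect} complex of $\cO_E$-modules; that is, it is quasi-isomorphic to a bounded complex $P^\bullet$ of finitely generated free $\cO_E$-modules. This is the standard finiteness theorem for constructible $\ell$-adic sheaves on quasi-projective varieties; in the pro-\'etale framework of \cite{HemoEtAl,BS} used to define $\uD_{\cstr}(X,\cO_E)$, it follows from the six-functor formalism by the usual reduction to the proper case, where it is a consequence of proper base change and the finiteness of cohomology along proper maps.

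Next, I would use that the functors $K \mapsto K[\tfrac{1}{\ell}]$ and $K \mapsto \bar K$ are the change-of-coefficients functors along $\cO_E \to E$ and $\cO_E \to \kappa_E$, respectively, and that $R\Gamma_{\comp}(X,-)$ commutes with such base changes in the coefficient ring. This yields
\[
R\Gamma_{\comp}\bigl(X,K[\tfrac{1}{\ell}]\bigr) \cong P^\bullet \otimes_{\cO_E} E\, ,\qquad R\Gamma_{\comp}(X,\bar K) \cong P^\bullet \otimes^{\mathrm{L}}_{\cO_E} \kappa_E\, .
\]
Since $P^\bullet$ is a bounded complex of finite free $\cO_E$-modules, the right-hand derived tensor product is computed naively, and in each degree
\[
\dim_E(P^i \otimes_{\cO_E} E) \;=\; \rank_{\cO_E}(P^i) \;=\; \dim_{\kappa_E}(P^i \otimes_{\cO_E} \kappa_E)\, .
\]
Taking alternating sums gives $\chi_{\comp}(K[\tfrac{1}{\ell}]) = \sum_{i} (-1)^i \rank_{\cO_E}(P^i) = \chi_{\comp}(\bar K)$, as required.

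The only step requiring genuine input from the theory is the first one, namely the perfectness of $R\Gamma_{\comp}(X,K)$ as an object of $\uD(\cO_E)$; I expect this to be the main obstacle to pin down within the pro-\'etale formalism in which $\uD_{\cstr}(X,\cO_E)$ is defined, in particular because $\cO_E$ is not finite but only $\ell$-adically complete. Once that finiteness, together with the coefficient base-change identities, is in hand, the rest of the argument is purely formal: the lemma is the statement that the rank of a perfect complex over a DVR is the same whether one tensors with the fraction field or with the residue field.
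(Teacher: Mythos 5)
Your argument is correct and is essentially the proof in the paper: both rest on the fact that $R\Gamma_{\comp}(X,K)$ is a perfect complex over $\cO_E$ (the paper packages this as the equivalence $\uD_{\cstr}(\Spec k,\Lambda)\simeq\Perf_\Lambda$ from \cite[Lemma~4.1]{HemoEtAl}), on compatibility of $R\Gamma_{\comp}$ with coefficient base change, and then on comparing ranks — which the paper phrases via identifying all three Grothendieck groups with $\bbZ$. The perfectness you flag as the main worry is exactly what the cited lemma supplies, so no gap remains.
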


\begin{proof}
Write $S \coloneqq \Spec(k)$. For $\Lambda \in \{\Qlbar,\cO_E,\kappa_E\}$, the category $\uD_{\cstr}(S,\Lambda)$ is equivalent to the category $\Perf_\Lambda$ of perfect complexes of $\Lambda$-modules; see \cite[Lemma~4.1]{HemoEtAl}. In particular, in each of the three cases, the Grothendieck group of $\uD_{\cstr}(S,\Lambda)$ is isomorphic to~$\bbZ$. We have a commutative diagram
\[
\begin{tikzcd}
\uD_{\cstr}(X,\Qlbar) \ar[d,"R\Gamma_{\comp}"] & \uD_{\cstr}(X,\cO_E) \ar[l] \ar[r] \ar[d,"R\Gamma_{\comp}"] & \uD_{\cstr}(X,\kappa_E)\ar[d,"R\Gamma_{\comp}"]\\
\uD_{\cstr}(S,\Qlbar) & \uD_{\cstr}(S,\cO_E) \ar[l] \ar[r] & \uD_{\cstr}(S,\kappa_E)
\end{tikzcd}
\]
and the assertion follows by considering the induced maps on Grothendieck groups. 
\end{proof}

\begin{proposition}\label{prop:chiPnonneg}
Let $A$ be an abelian variety over a field~$k$. Then $\chi(P) \geq 0$ for every perverse sheaf~$P$ on~$A$.
\end{proposition}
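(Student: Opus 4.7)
My approach is to transpose the Franecki--Kapranov argument from the $\mathscr{D}$-module setting (which requires characteristic zero) to the $\ell$-adic setting via T.~Saito's theory of characteristic cycles for constructible \'etale sheaves. Two general inputs drive the proof: for a perverse sheaf $Q$ on a smooth $g$-dimensional variety $X$ over an algebraically closed field, the characteristic cycle $\CC(Q)$ is an effective $g$-cycle in the cotangent bundle, supported on a finite union of conical Lagrangians; and the index formula $\chi(X,Q) = \bigl\langle \CC(Q),\, [T^*_X X] \bigr\rangle$ identifies the Euler characteristic with the intersection of $\CC(Q)$ with the zero section.

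First I would reduce to the case where $k$ is algebraically closed, since Euler characteristics are invariant under base change to an algebraic closure. To apply Saito's theorems---formulated in~\cite{Saito} for sheaves with torsion coefficients---to the $\Qlbar$-perverse sheaf~$P$, I would fix a finite extension $E/\bbQ_\ell$ over which~$P$ is defined, choose a perverse integral model $\cP \in \uD_{\cstr}(A,\cO_E)$, and pass to the reduction $\bar{\cP} \in \uD_{\cstr}(A,\kappa_E)$. Lemma~\ref{lem:chi=chi}, combined with the equality $\chi = \chi_{\comp}$ on the proper variety~$A$, yields $\chi(P) = \chi_{\comp}(\bar{\cP})$. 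The remaining nuisance---that $\bar{\cP}$ need not itself be perverse---is the main technical obstacle, and I would handle it by working with the perverse cohomology sheaves of $\bar{\cP}$ (which, for a torsion-free $\cP$, are concentrated in at most two consecutive perverse degrees) together with a short additional argument to recover the required nonnegativity from the perverse pieces.

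The geometric heart of the argument exploits the canonical trivialisation $T^*A \cong A \times V^*$ with $V^* = T^*_{A,0}$. Because $T^*A$ is a vector bundle over~$A$, any two horizontal sections represent the same class in $\CH_g(T^*A)$, so by homological invariance of intersection numbers,
\[
\chi(P) \;=\; \bigl\langle \CC(P),\, [A\times\{0\}] \bigr\rangle \;=\; \bigl\langle \CC(P),\, [A\times\{\xi\}] \bigr\rangle
\qquad\text{for every } \xi \in V^*.
\]
Each component of the support of $\CC(P)$ is the conormal variety $T^*_Z A$ of some closed subvariety $Z \subset A$, and its intersection with $A\times\{\xi\}$ consists of the points $(z,\xi)$ with $z \in Z$ smooth and $\xi \in (T_z Z)^\perp$. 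For $Z = A$ this intersection is empty as soon as $\xi \neq 0$; for $Z \subsetneq A$ and sufficiently general $\xi$, it is a finite set of points, each contributing a strictly positive local intersection multiplicity (proper intersection of reduced irreducible subvarieties of a smooth ambient). Combined with the nonnegativity of the multiplicities in $\CC(P)$---the effectivity statement for characteristic cycles of perverse sheaves---this yields $\chi(P) \geq 0$.
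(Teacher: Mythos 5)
Your overall strategy coincides with the paper's: reduce to $k$ algebraically closed, pass to torsion coefficients via Lemma~\ref{lem:chi=chi}, apply Saito's effectivity of characteristic cycles and his index formula, and compute the intersection with the zero section of $T^\vee A$ by exploiting the triviality of the bundle (your ``move the section'' argument and the paper's projection-formula/Gauss-map degree count are two phrasings of the same idea).

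The genuine gap is in the passage to $\kappa_E$-coefficients. You acknowledge that the reduction $\bar{\cP}$ of a torsion-free perverse $\cO_E$-model need not be perverse, and propose to handle this by analysing its perverse cohomology sheaves together with ``a short additional argument.'' This is exactly where the paper does something different and essential: it invokes \cite[Proposition~6.11]{HansenScholze}, which guarantees that one can enlarge $E$ so that the reduction $\bar{Q}$ \emph{is} perverse, after which Saito's theorems apply directly. Your proposed workaround is not innocuous. If $\bar{\cP}$ sits in two consecutive perverse degrees, say ${}^{\mathsf p}H^{-1}$ and ${}^{\mathsf p}H^0$, then
\[
\chi(\bar{\cP}) \;=\; \chi\bigl({}^{\mathsf p}H^0(\bar{\cP})\bigr) \;-\; \chi\bigl({}^{\mathsf p}H^{-1}(\bar{\cP})\bigr),
\]
and nonnegativity of each summand (what Saito gives you) does not yield nonnegativity of the difference. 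You would need a further argument controlling the ${}^{\mathsf p}H^{-1}$ term, and none is supplied; as written, this step does not close. The clean route is the one in the paper: choose the model so that the reduction is already perverse.

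A smaller inaccuracy: in positive characteristic, the irreducible components $C_j$ of the support of $\CC(\bar Q)$ are $g$-dimensional closed conical subsets of $T^\vee A$ but need not be conormal varieties $T^\vee_Z A$ of subvarieties $Z\subset A$; your description of them as conormals is specific to characteristic zero. This does not damage the argument — all that is used is that each $C_j$ is irreducible, closed, conical, and of dimension $g$, so that the Gauss map $C_j \to T^\vee_{A,0}$ has a well-defined nonnegative degree — but the statement as you gave it is not correct in the generality needed.
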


\begin{proof}
We may assume that $k$ is algebraically closed. For every perverse sheaf~$P$ with $\Qlbar$-coefficients, there is a finite field extension $\bbQ_\ell \subset E$ such that $P$ is of the form $Q[\frac{1}{\ell}]$ (notation as in Lemma~\ref{lem:chi=chi}) for a perverse sheaf~$Q$ in $\uD_{\cstr}(A,\cO_E)$, and such that the image $\bar{Q}$ of~$Q$ in $\uD_{\cstr}(A,\kappa_E)$ is again perverse (this is a special case of \cite[Proposition~6.11]{HansenScholze}). By Lemma~\ref{lem:chi=chi}, it suffices to show that $\chi(\bar{Q}) \geq 0$. 

Let $T^\vee A$ be the cotangent bundle of~$A$. Let $\CC(\bar{Q})$ be the characteristic cycle of~$\bar{Q}$, as defined by Saito in~\cite[Section~5.3]{Saito}; it is an integral linear combination $\CC(\bar{Q}) = \sum_j\, m_j  [C_j]$ of irreducible closed conical subsets $C_j \subset T^\vee A$. By~\cite[Proposition~5.14]{Saito}, $\CC(\bar{Q})$ is effective, that is, $m_j \geq 0$ for all~$j$. Furthermore, by~\cite[Theorem~7.13]{Saito}, the Euler characteristic~$\chi(\bar{Q})$ equals the intersection number of~$\CC(\bar{Q})$ with the zero section of~$T^\vee A$. Because $T^\vee A = A \times T^\vee_{A,0}$, we have a projection map $\pr\colon T^\vee A \to T^\vee_{A,0}$ (which is a proper morphism) and the zero section of~$T^\vee A$ is the inverse image of $0 \in T^\vee_{A,0}$. If we denote by~$d_j$ the degree of the morphism $\pr|_{C_j} \colon C_j \to T^\vee_{A,0}$ (the Gauss map of~$C_j$), the projection formula gives $\chi(\bar{Q}) = \sum_j\; m_j  d_j$. Because all $m_j$ and~$d_j$ are nonnegative, this gives the assertion. 
\end{proof}

\begin{remark}\label{rem:Notation0}
We shall frequently work in a setting where we have an abelian variety~$A_0$ over a base field~$k_0$ with algebraic closure~$k$. In such a setting, the notational convention is that a subscript~$0$ is used for objects defined over~$k_0$, and for the corresponding objects over~$k$ the subscript~$0$ is omitted. If $k_0 \subset k_1 \subset k$ is an intermediate field, a subscript~$1$ is used for objects over~$k_1$.
\end{remark}

\begin{lemma}\label{lem:SSPervFq}
Let $A_0$ be an abelian variety over a field~$k_0$. Set $A = A_0 \otimes_{k_0} k$, where $k$ is an algebraic closure of~$k_0$. Let $P_0$ be a perverse sheaf on~$A_0$ and let $P$ be its pullback to~$A$.
\begin{enumerate}
\item\label{P0ssPss} If $P_0$ is semisimple, so is~$P$.

\item\label{Isotyp/k1} Suppose $P_0$ is semisimple  and let 
\begin{equation}\label{eq:isotdec}
P = \bigoplus_\alpha P^{(\alpha)}
\end{equation}
be the isotypical decomposition of~$P$. Then this decomposition is defined over a finite extension~$k_1$ of~$k_0$ inside~$k$; by this, we mean that if~$P_1$ is the pullback of~$P_0$ to $A_1 = A_0 \otimes_{k_0} k_1$, there is a decomposition $P_1 = \bigoplus_\alpha P_1^{(\alpha)}$ that after pullback to~$A$ gives back~\eqref{eq:isotdec}.
\end{enumerate}
\end{lemma}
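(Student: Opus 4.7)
For part~(1), pullback commutes with direct sums, so we may assume $P_0$ is simple with support an irreducible closed subvariety $X_0 \subset A_0$ of dimension~$d$. Choose a dense open $j_0 \colon U_0 \hookrightarrow X_0$ such that $(U_0 \times_{k_0} k)_{\red}$ is smooth over~$k$, which exists by \cite[Corollaire~17.15.13]{EGAIV}; after shrinking, $P_0 = (j_0)_{!*}(\cF_0[d])$ for an irreducible smooth $\Qlbar$-sheaf $\cF_0$ on~$U_0$. Base change along $k_0 \to k$ is a cofiltered limit of base changes along finite extensions, each of which is either \'etale or a universal homeomorphism; in either case the perverse t-structure, and hence also $(-)_{!*}$, is preserved, so $P = j_{!*}(\cF[d])$ with $\cF$ the pullback of $\cF_0$ to $U = U_0 \times_{k_0} k$. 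Since $j_{!*}$ is additive and sends simple local systems to simple perverse sheaves, it suffices to show that $\cF$ is semisimple.

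Fix a geometric point $\bar u$ of~$U$ and let $V$ be the finite-dimensional continuous irreducible $\Qlbar$-representation of $G \coloneqq \pi_1(U_0, \bar u)$ corresponding to $\cF_0$. Then $\cF$ corresponds to $V|_H$, where $H \coloneqq \pi_1(U, \bar u)$ is an open subgroup of $G$; its index equals $|\pi_0(U)|$, which is finite because $U$ has finitely many connected components. Let $N = \bigcap_{g \in G} g H g^{-1}$, a normal open subgroup of~$G$ of finite index contained in~$H$. Clifford's theorem, applied to $N \triangleleft G$ and the irreducible $G$-module~$V$, gives semisimplicity of $V|_N$, and a Maschke-style averaging over the finite group $H/N$ (valid since $\Qlbar$ has characteristic zero) then yields semisimplicity of $V|_H$. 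This proves~\ref{P0ssPss}.

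For part~\ref{Isotyp/k1}, the isotypical decomposition $P = \bigoplus_\alpha P^{(\alpha)}$ is canonical, so the natural action of $\Gamma \coloneqq \Gal(k/k_0)$ on $P = P_0 \times_{k_0} k$ permutes the finitely many isotypical components; this action therefore factors through a finite quotient, and there is a finite extension $k_0 \subset k_1 \subset k$ such that each idempotent $e^{(\alpha)} \in \End_{\Perv(A)}(P)$ projecting onto $P^{(\alpha)}$ is $\Gal(k/k_1)$-invariant. By the spreading-out formalism for constructible sheaves (which in particular gives that $\uD_{\cstr}(A, \Qlbar)$ is the $2$-colimit of $\uD_{\cstr}(A_1, \Qlbar)$ over finite subextensions $k_0 \subset k_1 \subset k$, cf.~\cite{HansenScholze}), after enlarging $k_1$ if necessary each $e^{(\alpha)}$ descends to an idempotent of $\End_{\Perv(A_1)}(P_1)$; setting $P_1^{(\alpha)} \coloneqq e^{(\alpha)} P_1$ then gives the desired decomposition of $P_1$.

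The main obstacle is the Clifford step in~\ref{P0ssPss} when $U_0$ is irreducible but not geometrically connected, so that $H$ need not be normal in~$G$; the remedy is to pass to the normal core~$N$ and average over~$H/N$. Part~\ref{Isotyp/k1} is then largely formal, using the $2$-colimit description of $\Perv(A)$ that is implicit in the paper's spreading-out framework.
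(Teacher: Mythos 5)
Your argument for part~\ref{P0ssPss} contains a genuine error. You claim that $H \coloneqq \pi_1(U,\bar u)$ is an \emph{open} subgroup of $G \coloneqq \pi_1(U_0,\bar u)$ of finite index equal to $|\pi_0(U)|$. That is false: $H$ is the geometric fundamental group, which is a \emph{closed} subgroup of typically \emph{infinite} index. If $k_0'$ is the algebraic closure of~$k_0$ in the function field of~$U_0$ (so that $U_0$ is geometrically connected over~$k_0'$), the homotopy exact sequence is
\[
1 \tto \pi_1(U,\bar u) \tto \pi_1(U_0,\bar u) \tto \Gal(k^{\sep}/k_0') \tto 1\, ,
\]
so $[G:H] = |\Gal(k^{\sep}/k_0')|$, which is infinite unless $k_0' = k$. (You are treating $U\to U_0$ as though it were a finite \'etale cover, but it is only a cofiltered limit of such.) With $H$ of infinite index, the normal core $N = \bigcap_g gHg^{-1}$ need not have finite index in~$G$, and your Maschke-style averaging over $H/N$ has no meaning. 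Moreover, the obstacle you single out at the end is not actually there: the sequence above shows that $H$ \emph{is} a closed normal subgroup of~$G$, so Clifford's theorem applies directly, with no averaging. This is precisely the paper's argument; the preliminary passage to a finite extension~$k_1$ over which the components of~$V_1$ become geometrically irreducible serves mainly to set up the fundamental-group sequence and BBD~5.3.9 used in part~\ref{Isotyp/k1}.

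For part~\ref{Isotyp/k1}, the descent-of-idempotents strategy is reasonable, but "this action therefore factors through a finite quotient" is not automatic: one must show the action of $\Gamma = \Gal(k/k_0)$ on the finite set of isotypical components is \emph{continuous}, i.e.\ that the stabiliser of each component is open. For a general profinite group a finite-index subgroup need not be open, so the mere finiteness of the orbit does not produce a finite extension~$k_1$. The paper obtains this openness at the level of fundamental-group representations via BBD~5.3.9. Your appeal to a $2$-colimit description of $\Perv(A)$ over finite subextensions $k_0 \subset k_1 \subset k$ would also do the job, but it is not part of the Hansen--Scholze framework the paper actually invokes (which spreads out over a finite-field base, not along $k/k_0$), so it would need to be established or cited independently.
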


\begin{proof}
It suffices to prove this under the assumption that $P_0$ is simple. In this case, there is an irreducible subvariety $j \colon V_0 \hookrightarrow A_0$ such that $V_{\red}$ is smooth over~$k$, and an irreducible lisse $\ell$-adic sheaf~$\cE_0$ on~$V_0$ such that $P_0 \cong j_{!*}\cE_0\bigl[\dim(V_0)\bigr]$. There is a finite Galois extension $k_0 \subset k_1$ inside~$k$ such that all irreducible components of~$V_1$ are geometrically irreducible. Let $W^{(1)},\ldots,W^{(n)}$ be the irreducible components of~$V_1$ and let $\cE^{(\nu)}$ be the pullback of~$\cE_0$ to~$W^{(\nu)}$, which is a semisimple lisse $\ell$-adic sheaf. Replace~$V_0$ by any of the~$W^{(\nu)}$ and replace~$\cE_0$ by any of the simple factors of~$\cE^{(\nu)}$; then we are in the same situation as before but with the additional property that~$V_0$ is geometrically irreducible. Choose a base point $v \in V(k)$  and let $E$ be the fibre of~$\cE_0$ at~$v$. Then $\cE_0$ corresponds to an irreducible representation $\rho \colon \pi_1(V_0,v) \to \GL(E)$. Assertion~\ref{P0ssPss} follows from the fact that the restriction~$\rho^{\geo}$ of~$\rho$ to the geometric fundamental group~$\pi_1(V,v)$ is semisimple, because $\pi_1(V,v)$ is a closed normal subgroup of~$\pi_1(V_0,v)$. Assertion~\ref{Isotyp/k1} follows from the fact that there is a finite Galois extension $k_0 \subset k_1$ inside~$k$ such that $\pi_1(V_1,v) \subset \pi_1(V_0,v)$ preserves the isotypical decomposition of~$\rho^{\geo}$ (cf.\ the proof of~\cite[Proposition~5.3.9]{BBD}).
\end{proof}

\subsection{}\label{ssec:spreadingout}
Let $A_0$ be an abelian variety over a field~$k_0$ which is a finitely generated extension of~$\bbF_p$. We discuss some techniques that allow to reduce statements about perverse sheaves on~$A_0$ to the case where $k_0$ is a finite field. Our arguments are inspired by the work of Drinfeld~\cite{Drinfeld} and Weissauer~\cite{WeissFF}, but we shall rephrase these arguments using the theory of relative perverse sheaves developed by Hansen and Scholze in~\cite{HansenScholze}.

Let $\kappa$ be the algebraic closure of~$\bbF_p$ inside~$k_0$, which is a finite field. There exists a geometrically integral variety~$S$ over~$\kappa$ whose function field is~$k_0$ and such that $A_0$ extends to an abelian scheme $\pi \colon \cA \to S$. After shrinking~$S$, we may assume~$S$ is smooth over~$\kappa$;  from now on, we assume this. If~$\bar{s}$ is a geometric point of~$S$, let $\cA_{\bar{s}}$ be the fibre of~$\pi$ over~$\bar{s}$. Furthermore, let $\eta$ be the generic point of~$S$, so that $\cA_\eta = A_0$, and let $j_\eta \colon \cA_\eta \hookrightarrow \cA$ be the inclusion.

For the definition of when an object $\cK \in \uD_{\cstr}(\cA,\Qlbar)$ is called universally locally acyclic (abbreviated to ULA) relative to~$S$, we refer to \cite[Definition~3.2]{HansenScholze}. In \cite[Theorem~4.4]{HansenScholze}, it is shown that this agrees with the more traditional definition. We write $\uD^{\ULA}(\cA/S,\Qlbar)$ for the full subcategory of $\uD_{\cstr}(\cA,\Qlbar)$ consisting of the objects that are ULA relative to~$S$. On this category, Hansen and Scholze define a relative perverse $t$-structure, whose heart $\Perv^{\ULA}(\cA/S)$ is artinian and noetherian (see \cite[Theorem~6.8]{HansenScholze}). The relative perverse sheaves are the objects~$\cP$ of $\uD^{\ULA}(\cA/S)$ with the property that $\cP|_{\cA_{\bar{s}}}$ is perverse in the usual sense, for all geometric points~$\bar{s}$ of~$S$.

\subsection{}
We list some properties that we need. Notation and assumptions are as above.

\subsubsection{}\label{sssec:jeta*}
By \cite[Theorem~6.8]{HansenScholze}, the functor $j_\eta^* \colon \Perv^{\ULA}(\cA/S) \to \Perv(\cA_\eta)$ is exact and fully faithful, and its essential image is stable under subquotients. Furthermore, if $i_s \colon \cA_s \hookrightarrow \cA$ is the inclusion of a closed fibre, the functor  $i_s^* \colon \uD^{\ULA}(\cA/S,\Qlbar) \to \uD_{\cstr}(\cA_s,\Qlbar)$ is $t$-exact (this follows from  \cite[Theorem~6.1(ii)]{HansenScholze}).

\subsubsection{}\label{sssec:genericULA}
If $\cK$ is an object of $\uD_{\cstr}(\cA,\Qlbar)$, there is a dense open subset $U \subset S$ such that the restriction of~$\cK$ to $\cA_U = \pi^{-1}(U)$ is an object of $\uD^{\ULA}(\cA_U/U)$. This is a result of Deligne, who proved the analogous statement for torsion sheaves in \cite[Th\'eor\`emes de finitude, Th\'eor\`eme~2.13]{SGA4.5}, and whose proof for the case of $\Qlbar$-coefficients is given in \cite[Corollary~B.4]{BravGait}.

\subsubsection{}\label{sssec:spreadperv}
If $K_0$ is a perverse sheaf on $A_0 = \cA_\eta$, there is a dense open subset $U \subset S$ such that $K_0$ extends to a relatively perverse sheaf $\cK \in \Perv^{\ULA}(\cA_U/U)$ on~$\cA_U$ over~$U$. By~\ref{sssec:jeta*}, $\cK$ is then unique up to isomorphism.) To see this, first note that $j_{\eta,*}(K_0) \in \uD_{\cstr}(\cA,\Qlbar)$  so, by~\ref{sssec:genericULA}, we may, after shrinking~$S$, assume that there exists  $\cK \in \uD^{\ULA}(\cA/S)$ that extends~$K_0$. Then the $\cP^i = {}^{\mathsf{p}/S}H^i(\cK)$ are again in~$\uD^{\ULA}(\cA/S)$  and only finitely many of them are nonzero. For $i\neq 0$, the restriction of $\cP^i$ to~$\cA_\eta$ is zero. Since the (cohomology sheaves of the)~$\cP^i$ are constructible, it follows that $\cP^i$ (for $i \neq 0$) is supported on a constructible subset $C_i \subset \cA$ that does not meet the generic fibre~$\cA_\eta$. Because the morphism $\pi \colon \cA \to S$ is of finite type, a theorem of Chevalley (\cite[Corollaire~1.8.5]{EGAIV}) gives that $\pi(C_i) \subset S$ is constructible and does not contain the generic point~$\eta$. It follows that there is a dense open subset $U \subset S$ that is disjoint from $\bigcup_{i\neq 0} \pi(C_i)$. This~$U$ does the job, since ${}^{\mathsf{p}/S}H^i(\cK|_{\cA_U}) = 0$ for all $i \neq 0$, so $\cK|_{\cA_U} = {}^{\mathsf{p}/S}H^0(\cK|_{\cA_U})$ is perverse relative to~$U$.

\subsubsection{}\label{sssec:chiConstant}
If $\cK$ is an object of $\uD^{\ULA}(\cA/S)$ then, for every specialisation $u \colon \bar{t} \to S(\bar{s})$ of geometric points of~$S$ (here $S(\bar{s})$ denotes the strict henselisation of~$S$ at~$\bar{s}$), the cospecialisation map $\mathrm{cosp}(u) \colon R\Gamma(\cA_{\bar t},\cK|_{\cA_{\bar t}}) \to R\Gamma(\cA_{\bar s},\cK|_{\cA_{\bar s}})$ is an isomorphism; see Illusie's appendix to   \cite[Th\'eor\`emes de finitude]{SGA4.5}.  (This uses that $\cA \to S$ is proper.) In particular, the function $\bar s \mapsto \chi(\cK|_{\cA_{\bar s}})$ is constant.

\subsection{}\label{subsec:char0orp}
In the next results, we consider an abelian variety~$A$ over a field~$k$. We shall assume that either $\charact(k)=0$  or that we work in the following setting:
\begin{enumerate}[leftmargin=9ex,itemindent=0pt,label=\textup{(\thesubsection.\arabic*)}]
\item\label{charp} $k$ is an algebraic closure of a field~$k_0$ which is a finitely generated extension of~$\bbF_p$, for some prime number~$p$, and $A = A_0 \otimes_{k_0} k$ for some abelian variety~$A_0$ over~$k_0$.
\end{enumerate}

\begin{proposition}\label{prop:chiP=0}
Let $A$ be an abelian variety over a field~$k$ and let $K$ and~$L$ be perverse sheaves on~$A$. Assume that either $\charact(k) = 0$, or that we are in setting of~\ref{charp} and that $K$ and~$L$ are the pullbacks of perverse sheaves $K_0$ and~$L_0$ on~$A_0$. If $P$ is a subquotient of ${}^{\mathsf{p}}H^i(K * L)$ with $i\neq 0$, we have $\chi(P) = 0$.
\end{proposition}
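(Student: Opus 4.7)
Write $P_i \coloneqq \pH^i(K*L)$; only finitely many $P_i$ are nonzero. My plan is first to show $\chi(P_i) = 0$ for all $i \neq 0$ via a character-twist argument combined with K\"unneth, and then to deduce the vanishing for arbitrary subquotients using additivity of~$\chi$ together with the nonnegativity from Proposition~\ref{prop:chiPnonneg}. The crucial external input is a generic vanishing theorem for perverse sheaves on abelian varieties: for every perverse sheaf~$Q$ on~$A$, there is a dense (Zariski-open in the appropriate character space) set of continuous characters~$\psi$ for which $H^j(A,Q_\psi) = 0$ whenever $j \neq 0$. This is due to Kr\"amer and Weissauer~\cite{KW} in characteristic zero, and to Weissauer~\cite{WeissFF} in setting~\ref{charp}. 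Since only finitely many of the sheaves $K$, $L$, $\{P_i\}_{i \neq 0}$ are nonzero, I can choose a single~$\psi$ lying in the generic locus of all of them simultaneously.

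For such a $\psi$, the identity $(K*L)_\psi = K_\psi * L_\psi$ from~\eqref{eq:Kpsi*Lpsi}, combined with the K\"unneth formula
\[
R\Gamma\bigl(A,(K*L)_\psi\bigr) \cong R\Gamma(A,K_\psi) \otimes^{\mathrm{L}} R\Gamma(A,L_\psi),
\]
forces $H^n(A,(K*L)_\psi) = 0$ for every $n \neq 0$. Because twisting by~$\bbL_\psi$ is $t$-exact for the perverse $t$-structure, one has $\pH^i((K*L)_\psi) = (P_i)_\psi$, and the perverse hypercohomology spectral sequence
\[
E_2^{a,b} = H^a\bigl(A,(P_b)_\psi\bigr) \Longrightarrow H^{a+b}\bigl(A,(K*L)_\psi\bigr)
\]
degenerates at~$E_2$ by our choice of~$\psi$. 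Combining the two vanishings yields $H^0(A,(P_n)_\psi) = 0$ for every $n \neq 0$, whence $\chi((P_n)_\psi) = 0$; invariance of Euler characteristics under tensoring with a rank-one local system then gives $\chi(P_n) = 0$ for all $n \neq 0$. For an arbitrary subquotient~$P$ of~$P_i$ with $i \neq 0$, I use that $\Perv(A)$ is artinian and noetherian, so~$P_i$ admits a finite Jordan--H\"older filtration; the additivity of~$\chi$ on short exact sequences, combined with Proposition~\ref{prop:chiPnonneg} and $\chi(P_i) = 0$, forces every composition factor of~$P_i$---and hence every subquotient~$P$---to have vanishing Euler characteristic.

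The only point where real work is needed is securing the generic vanishing input in positive characteristic without semisimplicity assumptions; this is precisely where hypothesis~\ref{charp} and the spreading-out machinery of Section~\ref{ssec:spreadingout} come in. Concretely, I would spread $K_0$, $L_0$, and each $P_{i,0}$ out to relative perverse sheaves over a smooth $\kappa$-variety~$S$ (using~\ref{sssec:spreadperv}), and then combine the constancy of Euler characteristics in families from~\ref{sssec:chiConstant} with specialisation to a closed point of~$S$ having finite residue field---where Weissauer's form of the generic vanishing theorem directly applies---in order to transport the required vanishing back to the generic fibre.
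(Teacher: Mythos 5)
Your proposal is correct in substance and arrives at the statement by a route that is recognisably different from the paper's. Both arguments share the same frame: first prove $\chi\bigl(\pH^i(K*L)\bigr)=0$ for $i\neq 0$, then pass to arbitrary subquotients via additivity of $\chi$ together with the nonnegativity of Proposition~\ref{prop:chiPnonneg}; that second step is identical in the two proofs. For the first step, the paper simply \emph{cites} the Kr\"amer--Weissauer structure theorems for convolutions of semisimple perverse sheaves and then removes the semisimplicity hypothesis by an induction on socle length, using the long exact sequence of perverse cohomologies and again nonnegativity of~$\chi$. You instead \emph{re-derive} the key vanishing from first principles, by combining generic vanishing with the K\"unneth isomorphism $R\Gamma(A,(K*L)_\psi) \cong R\Gamma(A,K_\psi)\otimes^{\mathrm{L}} R\Gamma(A,L_\psi)$ and the perverse Leray spectral sequence. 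That is conceptually tidy and bypasses the socle filtration entirely, because generic vanishing passes to arbitrary perverse sheaves once one knows it for the (finitely many) composition factors; but it places the burden on knowing that generic vanishing is available in exactly the required form (all perverse sheaves defined over a finitely generated field, not merely semisimple ones), which is precisely the point the paper's socle argument is designed to sidestep. Two small repairs are needed in your write-up. First, for the $E_2$-degeneration you must choose $\psi$ generic for $P_0=\pH^0(K*L)$ as well, not only for the $P_i$ with $i\neq 0$; otherwise the differential $d_{b+1}\colon E_{b+1}^{0,b}\to E_{b+1}^{b+1,0}$ can be nonzero and you only conclude $E_\infty^{0,b}=0$, not $E_2^{0,b}=0$ (alternatively handle $b<0$ directly and deduce $b>0$ from Verdier duality via~\eqref{eq:D(K*L)}). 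Second, in the spreading-out step, what one should transport across the family is the \emph{Euler characteristic} $\chi(\cP^i_{\bar s})$, which is fibrewise constant by~\ref{sssec:chiConstant}; one cannot literally ``transport generic vanishing'' because a character $\psi$ does not make sense uniformly over $S$. The paper's phrasing is crisper on this point: it proves $\chi\bigl(\pH^i(K_0*L_0)\bigr)=\chi\bigl(\pH^i(\cK_s*\cL_s)\bigr)$ at a closed point and invokes the finite-field case there, without ever mentioning characters in the family.
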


\begin{proof}
By Proposition~\ref{prop:chiPnonneg}, it suffices to prove $\chi\bigl({}^{\mathsf{p}}H^i(K * L)\bigr) = 0$ if $i\neq 0$. 

First assume $\charact(k) = 0$. By \cite[Lemma~A.1]{JavanEtAl}, it suffices to prove the assertion for $k=\bbC$. In this case, the assertion follows from the results of Kr\"amer and Weissauer in~\cite{KW}. (See also \cite[Lemma~2.2.(5)]{LawrSawin} and the arguments below.)

Assume next we are in the situation of \ref{charp} with $k_0$ a finite field. Suppose $L_0$ is semisimple. If $K_0$ is semisimple too, the assertion follows from \cite[Corollaries 6.3 and~6.4, Theorem~9.1]{KW}. In general, let $K_0^\prime = \soc_1(K_0) \subset K_0$ and $K_0^\pprime = K_0/K_0^\prime$. Because the functor ${}^{\mathsf{p}}H^0$ is cohomological (see \cite[Th\'eor\`eme~1.3.6]{BBD}), this gives an exact sequence of perverse sheaves
\[
\cdots \tto {}^{\mathsf{p}}H^i(K_0^\prime * L_0) \xrightarrow{~\alpha~} {}^{\mathsf{p}}H^i(K_0 * L_0) \xrightarrow{~\beta~} {}^{\mathsf{p}}H^i(K_0^\pprime * L_0) \tto {}^{\mathsf{p}}H^{i+1}(K_0^\prime * L_0) \tto \cdots
\]
Arguing by induction on the socle length of~$K_0$, we may assume that $\chi\bigl({}^{\mathsf{p}}H^i(K_0^\prime * L_0)\bigr) = 0$ and $\chi\bigl({}^{\mathsf{p}}H^i(K_0^\pprime * L_0)\bigr) = 0$. It then follows from Proposition~\ref{prop:chiPnonneg} that also $\chi\bigl(\Image(\alpha)\bigr) = 0$ and $\chi\bigl(\Ker(\beta)\bigr) = 0$. Hence $\chi\bigl({}^{\mathsf{p}}H^i(K_0 * L_0)\bigr) = 0$. This gives the result for semisimple~$L_0$ and arbitrary~$K_0$. The general case (for $k_0$ finite) is handled in the same way, writing $L_0$ as an iterated extension of semisimple objects. 

Finally, consider the situation of \ref{charp}, where now $k_0$ is finitely generated over~$\bbF_p$ but no longer assumed to be finite. As in~\ref{ssec:spreadingout}, let $\kappa$ be the algebraic closure of~$\bbF_p$ inside~$k_0$. By~\ref{sssec:spreadperv}, we can find a geometrically integral variety~$S$ which is smooth over~$\kappa$, an  abelian scheme $\pi \colon \cA \to S$ whose generic fibre is~$A_0$, and relatively perverse sheaves $\cK$, $\cL \in \Perv^{\ULA}(\cA/S)$ whose restrictions to $A_0 = \cA_\eta$ are isomorphic to~$K_0$, respectively~$L_0$. For a closed point $s \in S$, we then have $\chi\bigl({}^{\mathsf{p}}H^i(K_0 * L_0)\bigr) = \chi\bigl({}^{\mathsf{p}}H^i(\cK_s * \cL_s)\bigr)$. This reduces the problem to the case of a finite base field, which we have already dealt with.
\end{proof}

\subsection{}\label{subsec:Stab}
Let $P$ be a semisimple perverse sheaf on an abelian variety~$A$ over an algebraically closed field~$k$. As shown in \cite[Lemma~21]{WeissAlmostConn}, there is a reduced closed subgroup scheme $\Stab(P) \subset A$ with the property that
\[
\Stab(P)\bigl(k\bigr) = \bigl\{a\in A(k) \bigm| t_a^*(P) \cong P\bigr\}\; .
\]
As this result is unpublished (as far as we are aware), let us sketch Weissauer's proof in the case where~$P$ is isotypical, which is the only case we need. Since $\Stab(P^{\oplus N}) = \Stab(P)$, we may assume that $P$ is simple. Write $S = \bigl\{a\in A(k) \bigm| t_a^*(P) \cong P\bigr\}$. The main point is to show that $S$ is constructible; because $S \subset A(k)$ is a subgroup, this implies that $S$ is closed in~$A(k)$, which gives the assertion.

To show that $S$ is constructible, we need two basic facts:
\begin{enumerate}
\item For arbitrary $K$, $L \in \uD_{\cstr}(A,\Qlbar)$, we have the relation $R\cHom\bigl(K,\bbD(L)\bigr) \cong \bbD(K \otimes^{\mathrm{L}} L)$, and this implies that $\Hom\bigl(K,\bbD(L)\bigr) \cong H^0\bigl(A,R\cHom(K,\bbD(L))\bigr) \cong H^0(A,K \otimes^{\mathrm{L}} L)^\vee$. 

\item For $a \in A(k)$, we have $\cH^0(K*L)_a \cong H^0\bigl(X,t_a^*(K) \otimes^{\mathrm{L}} (-\id_A)^*L\bigr)$.
\end{enumerate} 
Because~$P$ is a simple perverse sheaf, this gives
\[
a \in S \iff \Hom\bigl(t_a^*(P),P\bigr) \neq 0 \iff H^0\bigl(A,t_a^*(P) \otimes^{\mathrm{L}} \bbD(P)\bigr) \neq 0
\iff a \in \Supp\bigl(\cH^0(P * P^\vee)\bigr)\, ,
\]
where $P^\vee = (-\id_A)^*\bbD(P) \in \uD_{\cstr}(A,\Qlbar)$ is the dual of~$P$ for the convolution product. This shows that $S = \Supp\bigl(\cH^0(P * P^\vee)\bigr)$, which is constructible.

\begin{proposition}\label{prop:dimStab>0}
Let $A$ be an abelian variety over an algebraically closed field~$k$ and let $P$ be an isotypical perverse sheaf on~$A$. Assume that either $\charact(k) = 0$, or that we are in the setting of~\ref{charp} and that $P$ is the pullback of a perverse sheaf~$P_0$ on~$A_0$. If $\chi(P) = 0$, then $\dim\bigl(\Stab(P)\bigr) > 0$.
\end{proposition}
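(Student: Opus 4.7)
The plan is as follows. Since $P$ is isotypical, I can write $P \cong Q^{\oplus N}$ for a simple perverse sheaf $Q$; as $\chi(P) = N\chi(Q)$ and $\Stab(P) = \Stab(Q)$, this reduces the statement to the case $P$ simple, which I assume from now on. In particular $\End(P) = \Qlbar$. In the setting of~\ref{charp}, I would further reduce to the case $k = \Fpbar$ using the spreading-out techniques of~\S\ref{ssec:spreadingout}, Lemma~\ref{lem:SSPervFq}\ref{Isotyp/k1}, and the constancy of Euler characteristic recalled in~\ref{sssec:chiConstant}.

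The key computation is the following. Set $R \coloneqq {}^{\mathsf{p}}H^0(P * P^\vee)$, where $P^\vee = (-\id_A)^* \bbD(P)$. By K\"unneth, $\chi(P * P^\vee) = \chi(P) \chi(P^\vee) = \chi(P)^2 = 0$. Proposition~\ref{prop:chiP=0} gives that every subquotient of ${}^{\mathsf{p}}H^q(P * P^\vee)$ with $q \neq 0$ has $\chi = 0$; combined with additivity of $\chi$ across the perverse filtration, this yields $\chi(R) = 0$. By Proposition~\ref{prop:chiPnonneg}, every composition factor of~$R$ is a simple perverse sheaf with nonnegative Euler characteristic, and additivity on composition factors then forces each such factor to have $\chi = 0$. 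On the other hand, the computation in~\S\ref{subsec:Stab} together with $\End(P) = \Qlbar$ shows that $\cH^0(P * P^\vee)$ is a skyscraper on $\Stab(P)(k)$ with one-dimensional stalk at each point.

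Suppose for contradiction that $\dim(\Stab(P)) = 0$. Then $\cH^0(P * P^\vee)$ is a nonzero finite skyscraper; in particular, it contains the summand $\delta_0 = \IC_{\{0\}}$, which has $\chi = 1$. The goal is to deduce that $\IC_{\{0\}}$ occurs as a composition factor of~$R$; this would directly contradict the previous paragraph's conclusion that every composition factor of~$R$ has $\chi = 0$.

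The hard part is this final deduction. Simply knowing $\cH^0(P * P^\vee)_0 \neq 0$ is not sufficient in general: for instance, the $\IC$-sheaf of a variety with an isolated singularity at~$0$ can have nonzero $\cH^0$-stalk at~$0$ without $\IC_{\{0\}}$ being among its composition factors. The argument must instead exploit the rigidity of the convolution product, which furnishes a canonical coevaluation morphism $\IC_{\{0\}} \to P * P^\vee$ in $\uD_{\cstr}(A,\Qlbar)$ corresponding to $\id_P$ under $\Hom(\IC_{\{0\}}, P * P^\vee) \cong \End(P) = \Qlbar$. One expects this coevaluation to induce a nonzero morphism $\IC_{\{0\}} \to R$ in the perverse heart, realizing $\IC_{\{0\}}$ as a subobject, and hence a composition factor, of~$R$. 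Via the hypercohomology spectral sequence $\operatorname{Ext}^s_\Perv(\IC_{\{0\}}, {}^{\mathsf{p}}H^t(P * P^\vee)) \Rightarrow \Hom_{\uD_{\cstr}}(\IC_{\{0\}}, P * P^\vee)$, the delicate point is to rule out that the coevaluation is absorbed into higher-Ext contributions from ${}^{\mathsf{p}}H^q$ with $q < 0$; the finiteness of $\Stab(P)$ combined with Frobenius weight considerations over the finite base field (from the reduction in the first paragraph) should provide the required control.
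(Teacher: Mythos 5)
Your approach is genuinely different from the paper's: rather than citing the deep results of Kr\"amer--Weissauer \cite[Proposition~10.1]{KW} (for characteristic zero) and the main theorem of Weissauer~\cite{WeissFF} (for finite fields), you attempt a direct proof over~$\Fpbar$ using the rigid monoidal structure of the convolution category. The preliminary reductions and the computation that $\chi({}^{\mathsf{p}}H^0(P*P^\vee))=0$, that every composition factor of ${}^{\mathsf{p}}H^0(P*P^\vee)$ has vanishing Euler characteristic, and that $\cH^0(P*P^\vee)$ is a skyscraper on $\Stab(P)(k)$ with one-dimensional stalks, are all correct. This would be a nice simplification if it could be completed, since the paper's proof is essentially a citation plus a spreading-out argument.

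However, the final step is a genuine gap, not merely a delicate point. Over~$\Fpbar$ with $P$ simple and pure, $M \coloneqq P * P^\vee$ is pure of weight~$0$, hence decomposes as $\bigoplus_i {}^{\mathsf{p}}H^i(M)[-i]$, and the coevaluation $c\colon\IC_{\{0\}}\to M$ splits as $c = \sum_{i\leq 0} c_i$ with $c_i\colon\IC_{\{0\}}\to {}^{\mathsf{p}}H^i(M)[-i]$; the components with $i<0$ (elements of $\operatorname{Ext}^{-i}_{\Perv}(\IC_{\{0\}},{}^{\mathsf{p}}H^i(M))$) are not forced to vanish, and I do not see that Frobenius weights rule them out: each $ {}^{\mathsf{p}}H^i(M)[-i]$ is again pure of weight~$0$, so there is no weight obstruction. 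The natural attempt to force $c_0\neq 0$ by composing with the evaluation map computes the categorical trace of~$\id_P$, which is precisely~$\chi(P)=0$; so this composition vanishes and gives no information. What you are trying to reprove here is essentially the structure theory of perverse sheaves with $\chi=0$ on abelian varieties over finite fields, which is the content of~\cite{WeissFF} and is not formal. In addition, your reduction from a general finitely generated base field to~$\Fpbar$ is underspecified: the paper's careful argument (fixing an auxiliary prime~$q$ coprime to the order of the putative finite $\Stab(P)$, spreading out both $\cP$ and the sheaves $R\cHom(t_a^*\cP,t_b^*\cP)$ for $q$-torsion points $a,b$, and using the resulting isomorphisms of Hom-spaces to transfer a nontrivial torsion point of $\Stab(\cP_{\bar s})$ back to~$\Stab(P)$) is exactly what is needed to propagate positive-dimensionality of the stabiliser from the closed fibre to the generic fibre, and ``reduce to $\Fpbar$'' does not account for this.
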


\begin{proof}
If $\charact(k) = 0$, we can reduce to the case $k=\bbC$ by using \cite[Lemma~A.1]{JavanEtAl}  and, in that case, the assertion follows from \cite[Proposition~10.1]{KW}. In the setting of~\ref{charp} with $k_0$ a finite field, the assertion follows from the main theorem in~\cite{WeissFF}. 

Assume next that we are in the situation of \ref{charp} with $k_0$ a finitely generated extension of~$\bbF_p$. Assume $\Stab(P)$ is finite. Choose an integer~$n$ such that $\Stab(P) \subset A[n]$  and choose a prime number~$q$ prime to $pn$. After replacing~$k_0$ by a finite extension, we may assume that all $q$-torsion points of~$A$ are rational over~$k_0$. As in~\ref{ssec:spreadingout}, let $\kappa$ be the algebraic closure of~$\bbF_p$ inside~$k_0$. By~\ref{sssec:spreadperv}, we can find a geometrically integral variety~$S$ which is smooth over~$\kappa$, an  abelian scheme $\pi \colon \cA \to S$ whose generic fibre is~$A_0$, and a relatively perverse sheaf $\cP \in \Perv^{\ULA}(\cA/S)$ whose restriction to $A_0 = \cA_\eta$ is isomorphic to~$P_0$. This implies that $\chi(\cP_{\bar{s}}) = \chi(P) = 0$ for all $\bar{s} \in S(\Fpbar)$. By~\ref{sssec:genericULA}, we may further assume, after shrinking~$S$, that all $R\cHom\bigl(t_a^*(\cP),t_b^*(\cP)\bigr)$ with $a$, $b \in A_0[q]\bigl(k_0\bigr) = \cA[q]\bigl(S\bigr)$ are universally locally acyclic. By~\ref{sssec:chiConstant}, it follows from this that, for every $\bar{s} \in S(\Fpbar)$, we have
\begin{equation}\label{eq:Hom=Hom}
\Hom_{\Perv(A)}\bigl(t_a^*(P),t_b^*(P)\bigr) \isomarrow \Hom_{\Perv(\cA_{\bar{s}})}\bigl(t_a^*(\cP_{\bar{s}}),t_b^*(\cP_{\bar{s}})\bigr)\; .
\end{equation}
In particular, $\End(P) \isomarrow \End(\cP_{\bar{s}})$ and, since $P$ is isotypical, $\cP_{\bar{s}}$ is isotypical as well. Because the proposition is true for~$\cP_{\bar{s}}$, its stabiliser $\Stab(\cP_{\bar{s}})$ is a subgroup scheme of~$A$ of positive dimension. If $a$ is a point of order~$q$ on~$\Stab(\cP_{\bar{s}})$, it follows from~\eqref{eq:Hom=Hom} that $a \in \Stab(P)$; but this contradicts our choice of~$q$. This shows that $\Stab(P)$ cannot be finite.
\end{proof}

\begin{proposition}\label{prop:K*Lperv}
Let $A$ be an absolutely simple abelian variety of dimension $g$ over a field~$k$ and let $K$ and~$L$ be perverse $\Qlbar$-sheaves on~$A$. Assume that either $\charact(k) = 0$, or that we are in the setting of~\ref{charp} and that $K$ and~$L$ are the pullbacks of perverse sheaves~$K_0$ and~$L_0$ on~$A_0$. Assume further that there exist closed subvarieties $X$ and~$Y$ of~$A$ such that $\Supp(K) \subset X$ and $\Supp(L) \subset Y$, and such that $\dim(X) + \dim(Y) \leq g$. Then $K * L$ is again a perverse sheaf.
\end{proposition}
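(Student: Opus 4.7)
My plan is to prove $\pH^i(K*L) = 0$ for every $i \ne 0$ by contradiction. If a nonzero composition factor existed, I would use Proposition~\ref{prop:dimStab>0} and the absolute simplicity of~$A$ to show it must be translation-invariant on~$A$, hence have full support, and then contradict this by a local analysis of $K * L$ over the (necessarily generically finite) addition map.

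Suppose $\pH^i(K*L) \ne 0$ for some $i \ne 0$. In characteristic~$0$, I would work directly over $k$ and pick any simple subquotient~$P$ of $\pH^i(K*L)$; Proposition~\ref{prop:chiP=0} gives $\chi(P) = 0$, and Proposition~\ref{prop:dimStab>0} applies directly to the isotypical perverse sheaf~$P$. In the setting of~\ref{charp}, the situation is more delicate because a simple subquotient of $Q_0 \coloneqq \pH^i(K_0 * L_0)$ over~$k_0$ need not remain simple after pullback to~$A$. Instead I would pick a simple subquotient~$P_0$ of~$Q_0$ and invoke Lemma~\ref{lem:SSPervFq}: the pullback $P = P_0 \otimes_{k_0} k$ is semisimple, and its isotypical decomposition $P = \bigoplus_\alpha P^{(\alpha)}$ descends to a decomposition $P_1 = \bigoplus_\alpha P_1^{(\alpha)}$ over some finite extension $k_0 \subset k_1 \subset k$. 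Proposition~\ref{prop:chiP=0} yields $\chi(P_0) = 0$, hence $\chi(P) = 0$, and combined with $\chi(P^{(\alpha)}) \ge 0$ from Proposition~\ref{prop:chiPnonneg}, this forces $\chi(P^{(\alpha)}) = 0$ for every~$\alpha$.

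Each $P^{(\alpha)}$ is now an isotypical perverse sheaf on~$A$ pulled back from the finitely generated field~$k_1$, so the hypotheses of Proposition~\ref{prop:dimStab>0} are met (with $k_1$ in the role of $k_0$ in~\ref{charp}), yielding $\dim\bigl(\Stab(P^{(\alpha)})\bigr) > 0$. Since $k$ is algebraically closed, the reduced identity component of $\Stab(P^{(\alpha)})$ is an abelian subvariety of~$A$ of positive dimension; the absolute simplicity of~$A$ forces it to equal~$A$, so $t_a^*(P^{(\alpha)}) \cong P^{(\alpha)}$ for every $a \in A(k)$. Density of $A(k)$ in the irreducible variety~$A$ then forces $\Supp(P^{(\alpha)}) = A$. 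On the other hand $\Supp(P^{(\alpha)}) \subset \Supp(K*L) \subset X+Y$, so $X + Y = A$ and $\dim(X) + \dim(Y) = g$; in particular, the addition morphism $f \colon X \times Y \to A$ is surjective and generically finite.

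To finish, I would localise on the target. Upper semi-continuity of fibre dimension for the proper morphism~$f$ yields a dense open $V \subset A$ over which $f$ is quasi-finite, hence finite. Proper base change then gives
\[
(K*L)|_V \cong \bigl(f|_{f^{-1}(V)}\bigr)_*\bigl((K \boxtimes L)|_{f^{-1}(V)}\bigr)\, .
\]
Since the restriction of a perverse sheaf to an open subset is perverse and pushforward along a finite morphism is $t$-exact for the perverse $t$-structure, the right-hand side is perverse. Therefore $\pH^i(K*L)$ is supported on the proper closed subset $A \setminus V$ for every $i \ne 0$, contradicting $\Supp(P^{(\alpha)}) = A$. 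The main obstacle in this plan is the descent step: one has to combine Lemma~\ref{lem:SSPervFq} with the relative-perversity and spreading-out framework of~\ref{ssec:spreadingout} (already used in the proofs of Propositions~\ref{prop:chiP=0} and~\ref{prop:dimStab>0}) in order to produce an isotypical subquotient defined over a finitely generated base to which both of those propositions can be applied simultaneously.
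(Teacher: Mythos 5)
Your argument is correct, and up to the point where you deduce $\Stab(P^{(\alpha)}) = A$, hence $\Supp(P^{(\alpha)}) = A$ and $\dim(X)+\dim(Y)=g$, it runs parallel to the paper's proof (with a cosmetic difference in how the descent is organised: the paper replaces $k_0$ by a finite extension so that simple subquotients of $K_0 * L_0$ already have isotypical pullback, while you keep $k_0$ and apply Lemma~\ref{lem:SSPervFq}\ref{Isotyp/k1} to pass to $k_1$; in characteristic $0$ you should also pass to an algebraic closure, as the paper does, before invoking Proposition~\ref{prop:dimStab>0} and the density of $k$-points). Where you genuinely diverge is in the final contradiction. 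The paper takes $i$ maximal with $\pH^i(K_0*L_0)\neq 0$, notes that the isotypical sheaf $P$ with $\Stab(P)=A$ must be a power of $\bbL_\phi[g]$, twists by $\psi=\phi^{-1}$, and derives a contradiction from the resulting distinguished triangle together with the K\"unneth bound $H^m(A,K_\psi*L_\psi)=0$ for $m>g$ and the nonvanishing of $H^{2g}(A,\Qlbar)$. You instead exploit the geometry: once $X+Y=A$ with $\dim(X)+\dim(Y)=g$, the addition map is generically finite, so over a dense open $V\subset A$ it is finite; by base change $(K*L)|_V$ is a finite pushforward of a perverse sheaf, hence perverse, forcing $\pH^i(K*L)|_V=0$ for $i\neq0$ and contradicting $\Supp(P^{(\alpha)})=A$. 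Your localisation argument is more elementary (no character twist, no need to single out the maximal $i$, no cohomological dimension count) and is in fact closer in spirit to the way the paper later uses perversity of $K*L$ to prove semismallness in~\ref{subsec:pfmaincalc}; the paper's route buys instead a direct manipulation purely in terms of cohomology of $A$. Both are valid.
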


\begin{proof}
In order to treat the cases $\charact(k)=0$ and $\charact(k) = p$ simultaneously, we change (for the duration of the proof) the notation as follows: if $\charact(k) = 0$, write $k_0$ and~$A_0$ instead of~$k$ and~$A$, and write $K_0$ and~$L_0$ instead of $K$ and~$L$; then we define~$k$ to be an algebraic closure of~$k_0$, and we follow the notational convention of Remark~\ref{rem:Notation0}. (Note that if $K_0 * L_0$ is perverse after base change to $k = \ol{k_0}$, it is perverse over~$k_0$.)

By Lemma~\ref{lem:SSPervFq}\ref{Isotyp/k1}, we may, after replacing~$k_0$ by a finite extension, assume that every simple subquotient~$P_0$ of $K_0*L_0$ (that is, every composition factor of $K_0 * L_0$) has the property that its pullback~$P$ to~$A$ is isotypical. 

We assume that $K_0 * L_0$ is not perverse; our goal is to derive a contradiction. Let $i$ be the largest integer such that ${}^{\mathsf{p}}H^i(K_0 * L_0) \neq 0$. Without loss of generality, we may assume $i>0$; otherwise, replace $K_0$ and~$L_0$ by their Verdier duals and use~\eqref{eq:D(K*L)}. Let $P_0$ be a simple quotient of ${}^{\mathsf{p}}H^i(K_0 * L_0)$ in~$\Perv(A_0)$. By Propositions~\ref{prop:chiP=0} and~\ref{prop:dimStab>0}, $\dim\bigl(\Stab(P)\bigr) > 0$. Because $A$ is simple and~$P$ is supported on $X+Y$, it follows that $X + Y = \Stab(P) = A$, which implies   $\dim(X) + \dim(Y) = g$. Moreover, because $\pi_1(A,0)$ is abelian and $P$ is isotypical, there exists a continuous character $\phi \colon \pi_1(A,0) \to \ol\bbQ^\times_\ell$ such that $P$ is a power of $\bbL_\phi[g]$. Let $\psi = \phi^{-1}$. By how~$i$ was chosen and the relation \eqref{eq:Kpsi*Lpsi}, we have a distinguished triangle of the form $\cF \tto K_\psi * L_\psi \tto \Qlbar[g-i] \tto$ with $\cF \in {}^{\mathsf{p}}\uD^{\leq i}(A,\Qlbar)$ and $i>0$. This gives a long exact sequence
\[
\cdots \tto H^{g+i}(A,K_\psi * L_\psi) \tto H^{2g}(A,\Qlbar) \tto H^{g+i+1}(A,\cF) \tto \cdots\; .
\]
Because $\cF \in {}^{\mathsf{p}}\uD^{\leq i}(A,\Qlbar)$, we have $H^{g+i+1}(A,\cF) = 0$. Also, $H^\bullet(A,K_\psi * L_\psi) \cong H^\bullet(A,K_\psi) \otimes H^\bullet(A,L_\psi)$, which vanishes in degrees above $\dim(X) + \dim(Y) = g$. This gives a contradiction with  the fact that $H^{2g}(A,\Qlbar) \neq 0$.
\end{proof}

\section{The addition map on an absolutely simple abelian variety}\label{sec:AddSimpleAV}

In this section, we prove the main theorems stated in the introduction.

\begin{theorem}\label{thm:semismall}
Let $A$ be an absolutely simple abelian variety of dimension~$g$ over a field~$k$. Let $X$ and~$Y$ be irreducible closed subvarieties of~$A$ and define $Z \coloneqq X+Y \subset A$. If $\dim(X) + \dim(Y) \leq g$, the addition morphism $\sigma\colon X\times Y \to Z$ is semismall.
\end{theorem}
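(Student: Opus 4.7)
The plan is to deduce the theorem from Proposition~\ref{prop:K*Lperv} by specialising it to $K = \IC_X$ and $L = \IC_Y$. Writing $d = \dim(X)$ and $e = \dim(Y)$, the external product $\IC_X \boxtimes \IC_Y$, restricted to $X\times Y$, coincides with $\IC_{X\times Y}$ (both are the intermediate extensions of $\Qlbar[d+e]$ from the smooth open $X^{\mathrm{sm}} \times Y^{\mathrm{sm}}$ of $X\times Y$), and the restriction of the addition morphism to $X\times Y$ is proper onto~$Z$. Hence the convolution $\IC_X * \IC_Y$ on~$A$ is the pushforward (from~$Z$ to~$A$) of $R\sigma_*\IC_{X\times Y}$, where now $\sigma$ denotes the morphism $X\times Y \to Z$. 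Proposition~\ref{prop:K*Lperv} therefore says that $R\sigma_*\IC_{X\times Y}$ is perverse on~$Z$. The proof is then completed by invoking the classical equivalence (see \cite[Proposition~4.2.1]{dCM}) that for a proper morphism $\sigma\colon S \to T$ of irreducible varieties, $\sigma$ is semismall if and only if $R\sigma_*\IC_S$ is perverse on~$T$; applied with $S = X\times Y$ and $T = Z$, this is the theorem.

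To apply Proposition~\ref{prop:K*Lperv} we must be either in characteristic zero or in the setting~\ref{charp}. Semismallness is a dimension-theoretic condition and is therefore invariant under algebraic base field extensions, so I may pass to the algebraic closure of~$k$. In characteristic zero, this suffices. If $\charact(k) = p > 0$, I would spread $(A,X,Y)$ out: there is a finitely generated subfield $k_0 \subset k$ together with $(A_0, X_0, Y_0)$ over~$k_0$ whose base change to~$k$ recovers $(A,X,Y)$, and after enlarging~$k_0$ I may assume $X_0, Y_0$ are geometrically integral. Any nonzero proper abelian subvariety of $A_{0,\overline{k_0}}$ would base-change, along $\overline{k_0} \hookrightarrow k$, to a nonzero proper abelian subvariety of~$A$, so the absolute simplicity of~$A$ forces that of~$A_0$. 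Since intermediate extension commutes with base change along separable algebraic extensions, $\IC_X$ and $\IC_Y$ are the pullbacks of $\IC_{X_0}$ and $\IC_{Y_0}$, placing us in the setting~\ref{charp}.

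The technical heart of the proof is Proposition~\ref{prop:K*Lperv}, which is already in hand, so the remainder is essentially formal. The one point requiring some care is the invocation of the equivalence "$R\sigma_*\IC_S$ perverse $\Leftrightarrow$ $\sigma$ semismall" when the source $X\times Y$ may be singular. Concretely, one stratifies~$Z$ into locally closed strata $S_\alpha$ of dimensions~$s_\alpha$ so that $\sigma|_{\sigma^{-1}(S_\alpha)} \to S_\alpha$ is topologically locally trivial and $R\sigma_*\IC_{X\times Y}$ has locally constant cohomology sheaves on each~$S_\alpha$; letting $m_\alpha$ denote the generic fibre dimension over~$S_\alpha$, proper base change together with the nonvanishing of $\cH^{-(d+e)}(\IC_{X\times Y})$ on a top-dimensional component of a generic fibre shows that $R^{2m_\alpha - (d+e)}\sigma_*\IC_{X\times Y}$ is a nonzero local system on~$S_\alpha$, whence the support condition imposed by perversity forces $s_\alpha + 2m_\alpha \leq d+e$, which is the semismall inequality.
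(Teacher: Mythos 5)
Your reduction to characteristic zero or to the setting \ref{charp} is fine, and so is the identification $R\sigma_*\IC_{X\times Y}$ with $\IC_X * \IC_Y$ (supported on $Z$). The gap is in the final step: the equivalence you invoke, ``$\sigma$ semismall $\iff$ $R\sigma_*\IC_S$ perverse,'' is proved in \cite[Proposition~4.2.1]{dCM} only for a \emph{nonsingular} source~$S$. The paper makes this point explicitly in~\ref{subsec:ideaproof}, where essentially your argument is sketched \emph{under the extra hypothesis that $X$ and~$Y$ are smooth} and then flagged as insufficient in general. When the source is singular, $\IC_{X\times Y}|_{F_z}$ is not simply $\Qlbar_{F_z}[d+e]$, and the nonvanishing of $\mathbb{H}^{2n-(d+e)}(F_z, \IC_{X\times Y}|_{F_z})$ for $n=\dim F_z$ is not automatic.

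Your attempted repair in the last paragraph does not close this gap. You argue that the nonvanishing of $\cH^{-(d+e)}(\IC_{X\times Y})$ on a top-dimensional component of a generic fibre forces $R^{2m_\alpha-(d+e)}\sigma_*\IC_{X\times Y}\neq 0$. But in the hypercohomology spectral sequence $E_2^{p,q}=H^p\bigl(F_z,\cH^q(\IC_{X\times Y})|_{F_z}\bigr)\Rightarrow \mathbb{H}^{p+q}\bigl(F_z,\IC_{X\times Y}|_{F_z}\bigr)$, the term $E_2^{2m_\alpha,-(d+e)}$, while not hit by any outgoing differential, can receive nonzero incoming differentials from the terms $E_r^{2m_\alpha-r,\,-(d+e)+r-1}$ involving the higher cohomology sheaves $\cH^{q}(\IC_{X\times Y})$ with $q>-(d+e)$; these are precisely the sheaves supported on the singular locus of $X\times Y$. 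When $F_z$ lies largely inside that singular locus, you have given no reason for the abutment in total degree $2m_\alpha-(d+e)$ to remain nonzero. This is exactly the point the paper labours over: it decomposes the fibre $F_z$ into the part $V_z$ in the regular open $U_X\times U_Y$ and the part $D_z$ in the boundary $\Delta$, shows $H^{2n}_c(V_z,\Qlbar)\neq 0$ when $\dim V_z=n$ (Lemma~\ref{lem:topcoh}), controls the contribution from $D_z$ using perversity not only of $\IC_X*\IC_Y$ but also of the boundary convolutions $K*L$, $K*\IC_Y$ and $\IC_X*L$ with $K=\IC_X|_{\Delta_X}[-1]$, $L=\IC_Y|_{\Delta_Y}[-1]$ (all further applications of Proposition~\ref{prop:K*Lperv}), and finally disposes of the fibres whose top-dimensional part lies in the boundary by induction on $\dim X+\dim Y$. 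Your proof applies Proposition~\ref{prop:K*Lperv} only to $\IC_X$ and $\IC_Y$, so these additional inputs are missing, and without them the argument does not go through.
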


After some preparations, the proof of the theorem will be finished in Section~\ref{subsec:pfmaincalc}. In Corollary~\ref{cor:ssmall}, we shall extend the result to the case of more than two subvarieties.

\begin{remark}\label{rem:ThmGeometric}
Theorem~\ref{thm:semismall} is of a geometric nature. Suppose $k \subset k^\prime$ is a field extension and define $A^\prime \coloneqq A \otimes_k k^\prime$. Let $X^\prime \subset A^\prime$ be the reduced underlying subscheme of $X \otimes_k k^\prime$, and  define~$Y^\prime$ similarly. Then $X \times Y \to Z$ is semismall if and only if $X^\prime \times Y^\prime \to X^\prime + Y^\prime$ is semismall, and this can be checked on each irreducible component of $X^\prime \times Y^\prime$ separately. In the proof, we may therefore assume that $k$ is algebraically closed and that $X$ and~$Y$ are irreducible closed subvarieties of~$A$.
\end{remark}

\subsection{}\label{subsec:ideaproof}
To explain the idea of the proof, suppose that $k$ is algebraically closed and consider the special case where $X$ and~$Y$ are \emph{smooth} over~$k$, of dimensions $d=\dim(X)$ and $e=\dim(Y)$. Their intersection complexes are then simply $(i_{X})_*\Qlbar[d]$ and~$(i_{Y})_*\Qlbar[e]$, where $i_X \colon X\hookrightarrow A$ and $i_Y \colon Y\hookrightarrow A$ are the inclusions. Now we can argue as follows.
\begin{itemize}
\item If $z \in Z(k)$ and $F_z \subset X\times Y$ is the fibre of~$\sigma$ over~$z$, the stalk of the sheaf $\cH^m(\IC_X * \IC_Y)$ at~$z$ is $H^{m+d+e}(F_z,\Qlbar)$. (Here things greatly simplify because of our assumption that $X$ and~$Y$ are smooth over~$k$.)

\item If $\dim(F_z) = n$, then $H^{2n}(F_z,\Qlbar) \neq 0$ (see Lemma~\ref{lem:topcoh} below).

\item Proposition~\ref{prop:K*Lperv} gives that $\IC_X * \IC_Y$ is a perverse sheaf on~$A$. This implies that the locus of points $z\in Z$ such that $\cH^m(\IC_X * \IC_Y)_z \neq 0$ has dimension at most~$-m$. Taking $m = 2n-d-e$, we find that the locus of points $z\in Z$ with $\dim(F_z) = n$ has dimension at most $d+e-2n$, which is exactly the definition of semismallness.
\end{itemize}

\noindent
For the proof in the general case, we start with a lemma that is surely well known but for which we could not find a good reference.

\begin{lemma}\label{lem:topcoh}
Let $V$ be a nonempty scheme of finite type over an algebraically closed field, with $n\coloneqq \dim(V)  \ge 0$. Then $H^{2n}_c(V,\Qlbar) \neq 0$. 
\end{lemma}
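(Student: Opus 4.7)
The plan is to reduce to the case of an irreducible variety of top dimension and then apply Poincar\'e duality on a smooth dense open subset. Throughout, I may assume $V$ is reduced, since $\ell$-adic cohomology with compact support is insensitive to nilpotent elements.

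First, I would pick an irreducible component $W \subset V$ of dimension~$n$, which exists because $\dim(V) = n$. Since $W$ is closed in~$V$ with open complement $V^\prime \coloneqq V\setminus W$ of dimension at most~$n$, the localisation long exact sequence
\[
\cdots \tto H^{2n}_c(V^\prime,\Qlbar) \tto H^{2n}_c(V,\Qlbar) \tto H^{2n}_c(W,\Qlbar) \tto H^{2n+1}_c(V^\prime,\Qlbar) \tto \cdots
\]
combined with the standard vanishing $H^i_c(X,\Qlbar) = 0$ for $i > 2\dim(X)$ (which forces $H^{2n+1}_c(V^\prime,\Qlbar) = 0$) shows that the map $H^{2n}_c(V,\Qlbar) \tto H^{2n}_c(W,\Qlbar)$ is surjective. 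This reduces the problem to the case where $V = W$ is irreducible.

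Next, for irreducible~$V$ of dimension~$n$, I would invoke \cite[Corollaire~17.15.13]{EGAIV} to produce a dense open subset $U \subset V$ that is smooth over~$k$ of dimension~$n$; note that $U$ is connected, since~$V$ is irreducible. Writing $Z \coloneqq V \setminus U$, which is closed in~$V$ of dimension at most~$n-1$, a second application of the localisation sequence yields an exact sequence
\[
H^{2n-1}_c(Z,\Qlbar) \tto H^{2n}_c(U,\Qlbar) \tto H^{2n}_c(V,\Qlbar),
\]
and the same dimension bound gives $H^{2n-1}_c(Z,\Qlbar) = 0$, so that $H^{2n}_c(U,\Qlbar)$ injects into $H^{2n}_c(V,\Qlbar)$.

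Finally, Poincar\'e duality for the smooth connected $k$-scheme~$U$ of dimension~$n$ gives $H^{2n}_c(U,\Qlbar) \cong H^0(U,\Qlbar)^\vee(-n) \cong \Qlbar(-n)$, which is nonzero; combined with the inclusion above, this yields $H^{2n}_c(V,\Qlbar) \neq 0$. The argument is a routine combination of the localisation sequence, the dimension bound on compactly supported cohomology, and Poincar\'e duality, so I do not expect a real obstacle; the only mild points of care are the reductions to reduced and to irreducible~$V$ and the verification that~$U$ is connected, all of which are immediate.
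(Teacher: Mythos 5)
Your proof is correct and takes essentially the same approach as the paper: reduce to the reduced case, excise the lower-dimensional bad locus via the compactly-supported localisation sequence, and apply Poincar\'e duality on a smooth open piece. The only cosmetic difference is that you first peel off a top-dimensional irreducible component before passing to a smooth dense open subset, whereas the paper runs the localisation sequence directly with $V^{\reg}\subset V$.
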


\begin{proof}
Since \'etale cohomology only depends on the underlying reduced scheme, we may assume $V$ is reduced. We have an exact sequence
\[
\cdots \tto H^{m-1}_c(V^{\sing},\Qlbar) \tto H^m_c(V^{\reg},\Qlbar) \tto H^m_c(V,\Qlbar) \tto H^m_c(V^{\sing},\Qlbar) \tto \cdots
\]  
and $H^m_c(V^{\sing},\Qlbar) = 0$ for $m\geq 2n-1$. Moreover, $H^{2n}_c(V^{\reg},\Qlbar)$ is dual to $H^0(V^{\reg},\Qlbar)$, which is nonzero since $V$ is nonempty.
\end{proof}

\subsection{}\label{subsec:k=CFpbar}
With notation and assumptions as in Theorem~\ref{thm:semismall}, let $d \coloneqq \dim(X)$ and $e \coloneqq \dim(Y)$. Choose a subfield $k_0 \subset k$ which is finitely generated over its prime field, such that $A$, $X$, and~$Y$ are obtained from $A_0$, $X_0$, and~$Y_0$ over~$k_0$ by extension of scalars. Choose an algebraic closure $k_0 \subset k$, redefine $A$ to be $A = A_0 \otimes_{k_0} k$, and redefine $X$ and~$Y$ to be $(X_0 \otimes_{k_0} k)_{\red}$ and $(Y_0 \otimes_{k_0} k)_{\red}$, respectively. If the theorem is valid for the $A$, $X$, and~$Y$ thus obtained, it is valid for the original $A$, $X$ and~$Y$. This reduces the problem to the situation where $k$ is an algebraic closure of a field~$k_0$ which is finitely generated over its prime field, and where $A$, $X$, and~$Y$ are obtained from $A_0$, $X_0$, and~$Y_0$ over~$k_0$ by extension of scalars. As already remarked in~\ref{rem:ThmGeometric}, we may further assume that $X$ and~$Y$ are irreducible. In the remainder of the proof, we assume that we are in this situation.

\subsection{}\label{subsec:goodopens}
Choose affine regular open subsets $j_X \colon U_X \hookrightarrow X$ and $j_Y \colon U_Y \hookrightarrow Y$ that are defined over~$k_0$. Let $i_X \colon \Delta_X \hookrightarrow X$ and $i_Y \colon \Delta_Y \hookrightarrow Y$ be the (reduced) closed complements. Let $U \coloneqq U_X \times U_Y$ and $\Delta \coloneqq (X\times Y)\smallsetminus U = (\Delta_X \times Y) \cup (X\times \Delta_Y)$, and write $j \colon U \hookrightarrow X\times Y$ and $i\colon \Delta \hookrightarrow X\times Y$ for the inclusion maps. 

We have $\IC_{X\times Y} = \IC_X \boxtimes\, \IC_Y$. Define
\[
K \coloneqq \IC_X|_{\Delta_X}[-1]\, ,\qquad L \coloneqq \IC_Y|_{\Delta_Y}[-1]\, ,\qquad J \coloneqq \IC_{X\times Y}|_\Delta[-1].
\]
By \cite[Corollaire~4.1.12]{BBD}, these are perverse sheaves. Clearly, $\IC_X$, $\IC_Y$, $K$, and~$L$ are all pullbacks of perverse sheaves on~$A_0$. 

By adjunction, we have morphisms 
\[
K \boxtimes\, \IC_Y \xleftarrow{~\alpha_X~} J \xrightarrow{~\alpha_Y~} \IC_X\boxtimes L
\]
and we define $\alpha \coloneqq (\alpha_X,\alpha_Y) \colon J \to (K \boxtimes\, \IC_Y) \oplus (\IC_X\boxtimes L)$. Similarly, we have morphisms $\beta_X \colon \IC_X \to K[1]$ and $\beta_Y \colon \IC_Y \to L[1]$, and we define
\[
\beta \coloneqq (\id \boxtimes \beta_Y) - (\beta_X \boxtimes \id) \colon (K \boxtimes\, \IC_Y) \oplus (\IC_X\boxtimes L) \tto K\boxtimes L[1]\, .
\]

\begin{lemma}
We have a distinguished triangle
\begin{equation}\label{eq:JKLtriang}
J \xrightarrow{~\alpha~} (K \boxtimes\, \IC_Y) \oplus (\IC_X\boxtimes L) \xrightarrow{~\beta~} K\boxtimes L[1] \tto\; .
\end{equation}
\end{lemma}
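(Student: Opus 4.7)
The plan is to derive \eqref{eq:JKLtriang} from the Mayer--Vietoris distinguished triangle associated with the cover of~$\Delta$ by the two closed subsets $\Delta_X \times Y$ and $X \times \Delta_Y$, whose intersection is $\Delta_X \times \Delta_Y$. Write $i_1 \colon \Delta_X \times Y \hookrightarrow X \times Y$, $i_2 \colon X \times \Delta_Y \hookrightarrow X \times Y$, and $i_{12} \colon \Delta_X \times \Delta_Y \hookrightarrow X \times Y$ for the closed embeddings. For an arbitrary complex $\cF \in \uD_{\cstr}(X \times Y, \Qlbar)$, the first key step is to establish the distinguished triangle
\begin{equation*}
i_* i^* \cF \tto i_{1,*} i_1^* \cF \oplus i_{2,*} i_2^* \cF \tto i_{12,*} i_{12}^* \cF \tto,
\end{equation*}
whose first map consists of the two restrictions and whose second is the signed difference of the restrictions to the common intersection. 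I would verify this by checking that the corresponding sequence is short exact on stalks at points of each of the four strata $U_X \times U_Y$, $\Delta_X \times U_Y$, $U_X \times \Delta_Y$, and $\Delta_X \times \Delta_Y$, where the verification reduces to an immediate calculation.

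The second step is to substitute $\cF = \IC_{X \times Y} = \IC_X \boxtimes \IC_Y$. Using compatibility of $*$-pullback with external tensor products, the restrictions read
\begin{equation*}
i_1^* \IC_{X \times Y} \cong K[1] \boxtimes \IC_Y,\qquad i_2^* \IC_{X \times Y} \cong \IC_X \boxtimes L[1],\qquad i_{12}^* \IC_{X \times Y} \cong (K \boxtimes L)[2],
\end{equation*}
while $i^* \IC_{X \times Y} \cong J[1]$ by definition of~$J$. Shifting the resulting triangle by $-1$ and identifying complexes on the various closed subvarieties with their pushforwards to $X \times Y$ delivers a triangle of the exact shape of~\eqref{eq:JKLtriang}.

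It remains to match the maps with $\alpha$ and~$\beta$. The first map, in each factor, is the restriction from $\Delta$ to $\Delta_X \times Y$ (respectively $X \times \Delta_Y$), which is precisely the adjunction morphism defining $\alpha_X$ (respectively $\alpha_Y$). The second map comes from the restrictions from $\Delta_X \times Y$ and from $X \times \Delta_Y$ to the intersection $\Delta_X \times \Delta_Y$; these factor, via the external product, as $\id \boxtimes \beta_Y$ and $\beta_X \boxtimes \id$ respectively, and the opposite signs are dictated by the Mayer--Vietoris convention. The main obstacle is this final sign check: one must ensure that the Koszul sign rule for $\boxtimes$ and the suspension applied to the Mayer--Vietoris triangle combine to give exactly the combination $\beta = (\id \boxtimes \beta_Y) - (\beta_X \boxtimes \id)$ rather than the opposite sign. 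Once this bookkeeping is settled, the rest of the argument is formal.
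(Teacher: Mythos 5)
Your approach is essentially the same as the paper's: both rely on the Mayer--Vietoris sequence for the closed cover of~$\Delta$ by $\Delta_X\times Y$ and $X\times\Delta_Y$ with intersection $\Delta_X\times\Delta_Y$, the only cosmetic difference being that the paper writes the short exact sequence for a sheaf on~$\Delta$ and then derives, whereas you state a triangle directly for complexes restricted from $X\times Y$. Your final worry about the sign is harmless: if $(f,g,h)$ is a distinguished triangle then so is $(f,-g,-h)$ (compose with $-\id$ on the third term), so whatever sign the Mayer--Vietoris convention produces, the resulting triangle has the stated shape with the paper's~$\beta$.
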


\begin{proof}
Let
\[
(\Delta_X \times Y) \xhookrightarrow{~\iota_X~} \Delta \xhookleftarrow{~\iota_Y~} (X\times\Delta_Y) \, ,\qquad 
(\Delta_X \times \Delta_Y) \xhookrightarrow{~\theta~} \Delta
\]
be the inclusion maps. If $\cF$ is any $\Qlbar$-sheaf on~$\Delta_\proet$, we have a short exact sequence
\[
0 \tto \cF \xrightarrow{~\alpha~} (\iota_{X})_*\iota_X^* \cF \oplus  (\iota_{Y})_*\iota_Y^* \cF \xrightarrow{~\beta~} \theta_*\theta^* \cF \tto 0\, ,
\]
where $\alpha$ is the sum of the adjunction maps and $\beta$ is the difference of the adjunction maps (exactly as in the construction above). Passing to the derived category  gives the lemma.
\end{proof}

\subsection{}\label{subsec:pfmaincalc}
Keeping the notation and assumptions as above, let $\sigma \colon X\times Y \to Z$ be the addition map. For $z \in Z(k)$, let $F_z \coloneqq \sigma^{-1}(z)$ be the fibre of~$\sigma$ over~$z$. Let $V_z \coloneqq F_z \cap U$ and $D_z \coloneqq F_z \cap \Delta$.

By proper base change, the stalk of $\cH^m(\IC_X * \IC_Y)$ at~$z$ is isomorphic to $H^m(F_z,\IC_{X\times Y}|_{F_z})$. The short exact sequence of perverse sheaves 
\[
0 \tto j_!\, \Qbar_{\ell,U}[d+e] \tto \IC_{X\times Y} \tto i_*J[1] \tto 0
\]
(see \cite[(4.1.12.3)]{BBD}) gives rise to a long exact cohomology sequence
\begin{multline}\label{eq:cohseq1}
\cdots \tto H^m\bigl(D_z,J|_{D_z}\bigr) \tto H^{m+d+e}_c(V_z,\Qlbar)\tto\\ 
H^m\bigl(F_z,\IC_{X\times Y}|_{F_z}\bigr)\tto H^{m+1}\bigl(D_z,J|_{D_z}\bigr) \tto \cdots
\end{multline}
(We use the base change theorem for proper pushforwards; see \cite[Lemma~6.7.10]{BS}.) Moreover, the triangle~\eqref{eq:JKLtriang} gives a long exact sequence
\begin{multline}
\cdots \tto H^m\bigl(D_z,K\boxtimes L|_{D_z}\bigr) \tto H^m\bigl(D_z,J|_{D_z}\bigr) 
\tto \\
H^m\bigl(D_z,K\boxtimes\, \IC_Y|_{D_z}\bigr) \oplus H^m\bigl(D_z,\IC_X\boxtimes L|_{D_z}\bigr)
\tto H^{m+1}\bigl(D_z,K\boxtimes L|_{D_z}\bigr) \tto \cdots
\end{multline}
Because $H^m\bigl(D_z,K\boxtimes L|_{D_z}\bigr)$ is the stalk at~$z$ of $\cH^m(K*L)$, the perversity of $K*L$ (Proposition~\ref{prop:K*Lperv}) implies that the locus of points~$z$ with $H^m\bigl(D_z,K\boxtimes L|_{D_z}\bigr) \neq 0$ has dimension at most~$-m$. Exactly the same holds for the terms $H^m\bigl(D_z,K\boxtimes\, \IC_Y|_{D_z}\bigr)$ and $H^m\bigl(D_z,\IC_X\boxtimes L|_{D_z}\bigr)$.

For $n\geq 0$, define $Z_n^\circ = \bigl\{z\in Z\bigm| \dim(F_z) = n\bigr\}$, which is a locally closed subset of~$Z$. Taking $m = 2n-d-e$ in the previous calculations, we find that there is a locus $Z^\prime_n \subset Z_n^\circ$ of dimension at most $d+e-2n$, such that
\[
H^{2n-d-e}\bigl(D_z,J|_{D_z}\bigr) = H^{2n-d-e+1}\bigl(D_z,J|_{D_z}\bigr) = 0
\]
for all $z \in Z_n^\circ\smallsetminus Z^\prime_n$.

Let $Z_n^\pprime = \bigl\{z\in Z_n^\circ\bigm| \dim(V_z) = n\bigr\}$. For $z \in Z_n^\pprime$, we have $H^{2n}_c(V_z,\Qlbar) \neq 0$ by Lemma~\ref{lem:topcoh}. By the exact sequence~\eqref{eq:cohseq1}, it follows that $H^{2n-d-e}\bigl(F_z,\IC_{X\times Y}|_{F_z}\bigr) \neq 0$ for all $z \in Z_n^\pprime\smallsetminus Z_n^\prime$. By perversity of $\IC_X * \IC_Y$ (again using Proposition~\ref{prop:K*Lperv}), it follows that $\dim(Z_n^\pprime\smallsetminus Z_n^\prime) \leq d+e-2n$. 

Finally, for $z \in Z_n^\circ\smallsetminus (Z_n^\prime \cup Z_n^\pprime)$, we have $\dim(D_z) = n$. By induction, we may assume that the addition maps $\Delta_X \times Y \to \Delta_X + Y$ and $X\times \Delta_Y \to X+\Delta_Y$ are semismall, so the locus of points~$z$ where $\dim(D_z) = n$ has dimension at most $d+e-1-2n$. 

In total, this gives $\dim(Z_n^\circ) \leq d+e-2n$, hence the addition map $X\times Y \to Z$ is semismall. This completes the proof of Theorem~\ref{thm:semismall}. \qed

\begin{corollary}\label{cor:dimZ}
Let $A$ be a $g$-dimensional absolutely simple abelian variety over a field. Let $X_1,\ldots, X_r$ be closed subvarieties of~$A$ and define $Z \coloneqq X_1 + \cdots + X_r$. We have
\[
\dim(Z) = \min\biggl\{g,\sum_{i=1}^r\, \dim(X_i)\biggr\}\, .
\] 
\end{corollary}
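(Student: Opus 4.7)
The upper bound $\dim(Z) \leq \min\bigl\{g, \sum \dim(X_i)\bigr\}$ is immediate: $Z$ is the image of $X_1 \times \cdots \times X_r$ under the iterated addition morphism, and $Z \subset A$. So the content lies in the lower bound, and the plan is to deduce it from Theorem~\ref{thm:semismall} by induction on~$r$.

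First I would reduce to the case where each $X_i$ is irreducible. Writing $X_i = \bigcup_{\alpha_i} X_{i,\alpha_i}$ as its decomposition into irreducible components, we have $Z = \bigcup_{\alpha_1,\ldots,\alpha_r}\bigl(X_{1,\alpha_1}+\cdots+X_{r,\alpha_r}\bigr)$ and $\dim(X_i) = \max_{\alpha_i}\dim(X_{i,\alpha_i})$. If the formula is known for irreducible subvarieties, taking the maximum over the tuples $(\alpha_1,\ldots,\alpha_r)$ recovers it for arbitrary closed subvarieties. So from now on assume each $X_i$ is irreducible; set $d_i \coloneqq \dim(X_i)$ and $s \coloneqq \sum_{i=1}^r d_i$.

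Next I would treat the case $s \leq g$ by induction on~$r$, the case $r=1$ being trivial. For the inductive step, let $Z' \coloneqq X_1 + \cdots + X_{r-1}$. Since $Z'$ is the image of the irreducible variety $X_1 \times \cdots \times X_{r-1}$ under the addition morphism, it is irreducible, and by induction $\dim(Z') = d_1 + \cdots + d_{r-1}$. Because $\dim(Z') + \dim(X_r) = s \leq g$, Theorem~\ref{thm:semismall} applies to $Z'$ and~$X_r$ and gives that the addition morphism $Z' \times X_r \to Z$ is semismall. This morphism is surjective with irreducible source, hence (as recalled in the introduction after the statement of Theorem~\ref{thm:semismall}) generically finite; therefore $\dim(Z) = \dim(Z') + \dim(X_r) = s$, as required.

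Finally, in the case $s > g$, I would reduce to the previous one by shrinking the~$X_i$. For each~$i$, every irreducible closed subvariety of dimension $d_i$ contains irreducible closed subvarieties of each dimension in $\{0,1,\ldots,d_i\}$ (for instance via successive general hyperplane sections in a projective embedding of~$A$, followed by taking an irreducible component). Choose irreducible closed subvarieties $X_i' \subset X_i$ with $\sum \dim(X_i') = g$. By the case already treated, $\dim(X_1' + \cdots + X_r') = g$; since this sum is an irreducible closed subvariety of~$A$ of dimension $g = \dim A$, it equals~$A$. Hence $Z \supset X_1' + \cdots + X_r' = A$, so $Z = A$ and $\dim(Z) = g$. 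I do not anticipate a genuine obstacle: all the substance is in Theorem~\ref{thm:semismall}, and what remains is a clean induction together with the elementary observation that a proper subvariety of~$A$ of maximal dimension is already all of~$A$.
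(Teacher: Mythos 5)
Your proof is correct and follows essentially the same route as the paper: reduce to irreducible subvarieties, induct on $r$ (the paper compresses this to ``it suffices to prove this for $r=2$''), shrink the $X_i$ to make the dimensions sum to $\min\{g,\sum d_i\}$, and then invoke Theorem~\ref{thm:semismall} to get generic finiteness of the addition map. The only cosmetic difference is that the paper handles both cases ($s\leq g$ and $s>g$) in one stroke by choosing the $X_i'$ up front with dimensions summing to $\min\{g,\sum d_i\}$, whereas you split into two cases; the content is identical.
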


\begin{proof}
It suffices to prove this for $r=2$. Let $X_1^\prime \subset X_1$ and $X_2^\prime \subset X_2$ be irreducible closed subvarieties such that $\dim(X_1^\prime)+\dim(X_2^\prime)= \min\{g,\dim(X_1)+\dim(X_2)\}$. Theorem~\ref{thm:semismall}, applied to $X_1^\prime$, $X_2^\prime \subset A$, gives that the addition map $\sigma \colon X_1^\prime \times X_2^\prime \to X_1^\prime + X_2^\prime$ is semismall. In particular, $\sigma$ is generically finite and we find  
\[
\dim(X_1+ X_2) \ge \dim(X_1^\prime + X_2^\prime)  = \dim(X_1^\prime) +\dim(X_2^\prime)= \min\{g,\dim(X_1)+\dim(X_2)\}\, .
\]
The reverse inequality is clear.
\end{proof}

\begin{corollary}\label{cor:codim}
Let $A$ be a $g$-dimensional absolutely simple abelian variety over a field. If $X_1, \ldots, X_r$ are closed subvarieties of~$A$ such that $\sum_{i=1}^r\, \codim_A(X_i)\le g$, then $\bigcap_{i=1}^r X_i\neq \varnothing$.
\end{corollary}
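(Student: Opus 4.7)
The plan is to proceed by induction on $r$, with Corollary~\ref{cor:dimZ} doing the real work in the base case $r=2$. Both the non-emptiness of $X_1 \cap X_2$ and a useful bound on its codimension will fall out of the dimension formula for $X_1 - X_2$; the inductive step will then be routine.

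Since $\bigcap X_i \neq \varnothing$ is a geometric condition, I would first extend scalars and assume $k = \kbar$, and then replace each~$X_i$ by an irreducible component of maximal dimension (which preserves $\codim_A$ and keeps the codimension hypothesis intact). With all~$X_i$ now irreducible, the base case $r=2$ runs as follows: the hypothesis gives $\dim(X_1) + \dim(X_2) \geq g$, so Corollary~\ref{cor:dimZ} applied to $X_1$ and $-X_2$ yields $\dim(X_1 - X_2) = g$. Since $X_1 - X_2$ is a closed irreducible subvariety of the irreducible $g$-dimensional variety~$A$, it must equal~$A$; in particular $0 \in X_1 - X_2$, i.e.\ $X_1 \cap X_2 \neq \varnothing$. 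Moreover, since the difference map $X_1 \times X_2 \to A$ is now surjective, the fibre-dimension theorem gives $\dim(X_1 \cap X_2) \geq \dim(X_1) + \dim(X_2) - g$, equivalently $\codim_A(X_1 \cap X_2) \leq \codim_A(X_1) + \codim_A(X_2)$.

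For the inductive step ($r \geq 3$), I would apply the base case to $X_1$ and~$X_2$, pick an irreducible component $X^\prime$ of $X_1 \cap X_2$ of maximal dimension so that $\codim_A(X^\prime) \leq \codim_A(X_1) + \codim_A(X_2)$, and then invoke the inductive hypothesis on the $r-1$ subvarieties $X^\prime, X_3, \ldots, X_r$, whose codimensions still sum to at most~$g$. Since $X^\prime \subset X_1 \cap X_2$, any point of the resulting intersection lies in $\bigcap_{i=1}^r X_i$. I do not anticipate a genuine obstacle: the one slightly delicate point is the codimension bound for pairwise intersections, but this follows from the fibre-dimension theorem once the difference map is known to be surjective, which is precisely what absolute simplicity of~$A$ (via Corollary~\ref{cor:dimZ}) provides.
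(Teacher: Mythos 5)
Your proof is correct and follows essentially the same route as the paper: surjectivity of the difference map $X_1\times X_2\to A$ (via Corollary~\ref{cor:dimZ}) gives the case $r=2$, with $X_1\cap X_2$ identified as the fibre over~$0$, and the general case follows by induction. The paper compresses the inductive step to the phrase \emph{the induction step reduces to the case $r=2$}, so your explicit fibre-dimension bound $\codim_A(X_1\cap X_2)\leq\codim_A(X_1)+\codim_A(X_2)$ is precisely the detail the paper leaves implicit.
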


\begin{proof}
We argue by induction on~$r$. For $r=1$, there is nothing to prove, and the induction step reduces to the case $r=2$. For $r=2$, suppose  $\codim_A(X_1) + \codim_A(X_2) \leq g$ (by definition, $\codim_A(X_i)\coloneqq g-\dim(X_i)$). By Corollary~\ref{cor:dimZ}, the difference map $X_1 \times X_2 \to A$ is then surjective. Since $X_1\cap X_2$ is isomorphic to its fibre over~$0$, we obtain $X_1 \cap X_2 \neq \varnothing$. 
\end{proof}

We now extend Theorem~\ref{thm:semismall} to the case of more than two subvarieties.

\begin{corollary}\label{cor:ssmall}
Let $A$ be an absolutely simple abelian variety of dimension~$g$ over a field~$k$, let $X_1,\ldots,X_r$ be irreducible closed subvarieties of~$A$ and define $Z \coloneqq X_1 + \cdots + X_r$. If $\sum_{i=1}^r\, \dim(X_i) \leq g$, the addition map $\sigma \colon X_1 \times \cdots \times X_r \to Z$ is semismall.
\end{corollary}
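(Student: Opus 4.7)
Our plan is to argue by induction on $D \coloneqq \sum_i \dim(X_i)$, generalising the proof of Theorem \ref{thm:semismall} to $r$ factors; the base case $D=0$ is immediate. For the inductive step, as in Remark \ref{rem:ThmGeometric} and Subsection \ref{subsec:k=CFpbar} we may assume $k$ is an algebraic closure of a finitely generated field $k_0$ over which $A$ and each irreducible $X_i$ are defined; write $P \coloneqq \prod_i X_i$. The crucial input is an iterated form of Proposition \ref{prop:K*Lperv}: if $K_1,\ldots,K_r$ are perverse sheaves on $A$, each supported on $X_i$ and (in positive characteristic) pulled back from $A_0$, then $K_1 * \cdots * K_r$ is perverse. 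This follows by a separate induction on $r$: $K_1 * \cdots * K_{r-1}$ is perverse by induction and, by Corollary \ref{cor:dimZ}, is supported in a subvariety of dimension $\sum_{i<r}\dim X_i$, so Proposition \ref{prop:K*Lperv} applies to its convolution with $K_r$. In particular, $R\sigma_*\IC_P = \IC_{X_1}*\cdots*\IC_{X_r}$ is perverse.

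Following Subsection \ref{subsec:goodopens}, I would next fix affine regular $k_0$-rational opens $U_i \subset X_i$ and set $\Delta_i = X_i\setminus U_i$, $U = \prod U_i$, $\Delta = P\setminus U = \bigcup_{i=1}^r\Delta_i^{\mathrm{big}}$ with $\Delta_i^{\mathrm{big}} = X_1\times\cdots\times\Delta_i\times\cdots\times X_r$, and $J = \IC_P|_\Delta[-1]$. For every nonempty $S\subset\{1,\ldots,r\}$, writing $\Delta_S^{\mathrm{big}} = \bigcap_{i\in S}\Delta_i^{\mathrm{big}}$, one has $\IC_P|_{\Delta_S^{\mathrm{big}}}[-|S|] \cong \bigl(\boxtimes_{i\in S}K_i\bigr)\boxtimes\bigl(\boxtimes_{i\notin S}\IC_{X_i}\bigr)$, where $K_i \coloneqq \IC_{X_i}|_{\Delta_i}[-1]$ is perverse by \cite[Corollaire~4.1.12]{BBD}. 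Pushing down under the addition map turns this box product into a convolution of the $K_i$ (for $i\in S$) with the $\IC_{X_j}$ (for $j\notin S$), which is perverse by the iterated statement above since $\sum_{i\in S}\dim\Delta_i + \sum_{i\notin S}\dim X_i < D\leq g$.

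With these preparations the argument of Subsection~\ref{subsec:pfmaincalc} extends. The long exact sequence~\eqref{eq:cohseq1}, associated on fibers $F_z = \sigma^{-1}(z)$ to the short exact sequence $0 \to j_{U,!}\Qlbar[D] \to \IC_P \to i_{\Delta,*}J[1] \to 0$, relates $H^m(F_z,\IC_P|_{F_z})$ to $H^{m+D}_c(V_z,\Qlbar)$ and $H^\bullet(D_z, J|_{D_z})$, with $V_z = F_z\cap U$ and $D_z = F_z\cap\Delta$. A Mayer--Vietoris resolution of $J$ for the closed cover $\Delta = \bigcup_i\Delta_i^{\mathrm{big}}$ expresses $H^\bullet(D_z, J|_{D_z})$ in terms of stalks of the perverse convolutions above, so perversity bounds the loci where $H^{2n-D}(D_z,J|_{D_z})$ or $H^{2n-D+1}(D_z,J|_{D_z})$ is nonzero by dimension $D-2n$; let $Z_n'$ denote their union. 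For $z \in Z_n^\circ \coloneqq \{z : \dim F_z = n\}$ with $\dim V_z = n$ and $z\notin Z_n'$, Lemma~\ref{lem:topcoh} and the long exact sequence force $H^{2n-D}(F_z,\IC_P|_{F_z})\neq 0$, and perversity of $R\sigma_*\IC_P$ bounds these $z$ by dimension $D-2n$. For $z\in Z_n^\circ$ with $\dim V_z < n$ one has $\dim D_z = n$; the inductive hypothesis applied to the addition map on each $\Delta_i^{\mathrm{big}}$ (of total dimension $<D$) bounds these $z$ strictly below $D-2n$. Combining yields $\dim Z_n^\circ \leq D - 2n$.

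The principal obstacle I anticipate is the Mayer--Vietoris bookkeeping: for $r\geq 3$ the single distinguished triangle~\eqref{eq:JKLtriang} is replaced by an $r$-term resolution of $J$, giving a spectral sequence whose relevant terms are stalks of the perverse convolutions above. Beyond this organisational issue, no new input is required beyond the iterated form of Proposition~\ref{prop:K*Lperv}, and all dimensional bounds reduce, as in the $r=2$ case, to perversity.
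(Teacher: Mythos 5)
Your proposal is essentially correct, but it takes a genuinely different route from the paper. You redo the entire perverse-sheaf argument of Section~4 directly for $r$ factors: iterate Proposition~\ref{prop:K*Lperv} (using Corollary~\ref{cor:dimZ} to control the supports of intermediate convolutions), replace the single distinguished triangle~\eqref{eq:JKLtriang} by a \v{C}ech-type resolution of $J=\IC_P|_\Delta[-1]$ for the closed cover $\Delta=\bigcup_i\Delta_i^{\mathrm{big}}$, and run an induction on $D=\sum_i\dim(X_i)$. All the pieces are there: the computation $\IC_P|_{\Delta_S^{\mathrm{big}}}[-|S|]\cong(\boxtimes_{i\in S}K_i)\boxtimes(\boxtimes_{i\notin S}\IC_{X_i})$ is right, the shifts work out so that each $E_1$-term of the spectral sequence is a stalk of a perverse convolution in the correct degree, and the three cases ($\dim V_z=n$ generic, $\dim V_z=n$ in $Z_n'$, $\dim V_z<n$) combine exactly as in Subsection~\ref{subsec:pfmaincalc}. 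The ``Mayer--Vietoris bookkeeping'' you flag is real but manageable, and you have not left any substantive gap.

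The paper, by contrast, does not touch perverse sheaves again once Theorem~\ref{thm:semismall} is proved. It argues by induction on $r$, using the $r=2$ case and the $(r-1)$-variable case as black boxes, and then the induction step is a purely geometric dimension count. Concretely: set $Y=X_1+\cdots+X_{r-1}$, so that $\alpha\colon X_1\times\cdots\times X_{r-1}\to Y$ and $\beta\colon Y\times X_r\to Z$ are semismall by induction and by Theorem~\ref{thm:semismall}; given an $n$-dimensional component $T$ of a generic fibre of $\sigma$ over $\Gamma\subset Z_n^\circ$, one projects it to $V\subset Y\times X_r$ with generic fibre dimension $e$ over $\Gamma$, then to $W\subset Y$ with generic $\alpha$-fibre dimension $\geq n-e$; semismallness of $W\times X_r\to W+X_r$ gives $\dim\Gamma\leq\dim W+\dim X_r-2e$, and semismallness of $\alpha$ gives $\dim W\leq\dim Y-2n+2e$, which combine to $\dim\Gamma\leq\dim Z-2n$. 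This has the advantage of being short, avoiding a new spectral sequence, and cleanly separating the perverse-sheaf input (already packaged in Theorem~\ref{thm:semismall}) from the combinatorics of iterating it. Your approach buys a ``uniform'' treatment of all $r$ at once and does not require the two-variable theorem as a black box, but at the cost of carrying along considerably heavier sheaf-theoretic bookkeeping; if you pursue it, I would recommend stating the iterated form of Proposition~\ref{prop:K*Lperv} and the \v{C}ech resolution as separate lemmas, and being explicit that the induction is on $D$ while $r$ stays fixed (and that one works component by component on the possibly reducible $\Delta_i$).
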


\begin{proof}
Without loss of generality, we may assume that $k$ is algebraically closed and that~$X_1,\ldots,X_r $ are irreducible closed subvarieties of~$A$. We argue by induction on~$r$. The case $r=1$ is trivial and the case $r=2$ is Theorem~\ref{thm:semismall}. Assume then that $r\geq 3$ and that the assertion is true whenever we consider at most $r-1$ subvarieties~$X_i$. In particular, if we set $Y \coloneqq X_1 + \cdots + X_{r-1}$, the addition maps
\[
X_1 \times \cdots \times X_{r-1} \xrightarrow{~\alpha~} Y\, ,\qquad
Y \times X_r \xrightarrow{~\beta~} Z
\]
are semismall.

For $n \geq 0$, let $Z_n \subset Z$ be the closed subset of points $z\in Z$ such that $\dim \sigma^{-1}(z) \geq n$, and define $Z_n^\circ \coloneqq Z_n\smallsetminus Z_{n+1}$. Our goal is to prove that $Z_n$ has dimension at most $\dim(Z) - 2n$. (By Corollary~\ref{cor:dimZ} we have $\dim(Z) = \sum_{i=1}^r\, \dim(X_i)$.) Let $\Gamma^\circ \subset Z_n^\circ$ be an irreducible component and let $\Gamma \subset Z_n$ be its Zariski closure. Let $\gamma \in \Gamma$ be the generic point. Let $T_\gamma \subset \sigma^{-1}(\gamma)$ be an irreducible component of dimension~$n$, let $T \subset X_1 \times \cdots \times X_r$ be the Zariski closure of~$T_\gamma$, and let $V \coloneqq (\alpha \times \id)\bigl(T\bigr)$ be the image of~$T$ in $Y\times X_r$. By construction, $\sigma(T) = \beta(V) = \Gamma$. Also, the fibre of $V \to \Gamma$ over~$\gamma$ is irreducible; let $e$ be its dimension. 

Let $W \subset Y$ be the image of $V \subset Y\times X_r$ under the first projection map. Then $V \subset W \times X_r$, and hence $\Gamma \subset W+X_r$. Because the map $W\times X_r \to W+X_r$ is semismall, we have
\begin{equation}\label{eq:dimGam}
\dim(\Gamma) \leq \dim(W) + \dim(X_r) - 2e\; .
\end{equation}

By construction, the fibre of $\alpha \times \id \colon (X_1 \times \cdots \times X_{r-1}) \times X_r \to Y \times X_r$ over the generic point of~$V$ has dimension~$n-e$. It follows that the fibre of~$\alpha$ over the generic point of~$W$ also has dimension at least $n-e$. Because $\alpha$ is semismall, this implies   $\dim(W) \leq \dim(Y) - 2n + 2e$ and, together with~\eqref{eq:dimGam}, we obtain $\dim(\Gamma)\leq \dim(Y) + \dim(X_r) -2n = \dim(Z) - 2n$, which is what we wanted to prove.
\end{proof}

\begin{remark}\label{rem:simpleAV}
The following construction shows that in Corollary~\ref{cor:dimZ}, it does not suffice to assume that $A$ is simple (as opposed to absolutely simple).

Let $k$ be a field with separable algebraic closure $k \subset \ksep$. Let $k \subset L$ be a finite separable field extension and write $\Emb(L)$ for the set of $k$-homomorphisms $\sigma \colon L \to \ksep$. Let $\tilde{L} \subset \ksep$ be the compositum of all subfields $\sigma(L) \subset \ksep$, for $\sigma \in \Emb(L)$, which is a finite Galois extension of~$k$. In what follows, we view the elements $\sigma \in \Emb(L)$ as embeddings $L \hookrightarrow \tilde{L}$. The Galois group $\Gamma  = \Gal(\tilde{L}/k)$ naturally acts on~$\Emb(L)$ and this action is transitive.

If $B$ is an abelian variety over~$L$ and $\sigma \in \Emb(L)$, let $B_\sigma$ denote the abelian variety over~$\tilde{L}$ that is obtained by extension of scalars via~$\sigma$. If $A = \Res_{L/k}\, B$ is the abelian variety over~$k$ obtained by restriction of scalars, we have $A_{\tilde{L}} \cong \prod_{\sigma \in \Emb(L)}\; B_\sigma$. The natural action of~$\Gamma$ on~$A_{\tilde{L}}$ permutes the factors~$B_\sigma$.

If $I \subset \Emb(L)$ is a subset, define $T_I \subset A_{\tilde{L}} = \prod_{\sigma \in \Emb(L)}\; B_\sigma$ by 
\[
T_I \coloneqq \biggl\{(b_\sigma) \in \prod_{\sigma \in \Emb(L)}\; B_\sigma \biggm| \text{$b_\sigma = 0$ for all $\sigma \notin I$} \biggr\}\, .
\]
Clearly, $T_I$ is an abelian subvariety of~$A_{\tilde{L}}$ of dimension equal to $\# I \cdot \dim(B)$. If $\gamma \in \Gamma$, we have ${}^\gamma T_I = T_{\gamma(I)}$. Furthermore, for $I$, $J \subset \Emb(L)$ we have $T_I + T_J = T_{I\cup J}$.

Let $\cI$ be a $\Gamma$-orbit of subsets of~$\Emb(L)$. By Galois descent, there is a closed subscheme $X_\cI \subset A$ over~$k$ such that
\[
X_\cI \otimes_k \tilde{L} = \bigcup_{I\in \cI}\; T_I\, .
\]

We now make several choices that will give rise to an example of the desired kind. To begin with, we choose a field extension $k \subset L$ as above and two $\Gamma$-orbits $\cI$, $\cJ$ of subsets of~$\Emb(L)$ such that:
\begin{itemize}
\item if $I \in \cI$ and $J \in \cJ$, then $\# I + \# J = [L:k]$;
\item $I \cap J \neq \varnothing$ for all $I \in \cI$ and $J \in \cJ$.
\end{itemize}
(Note that $[L:k] = \# \Emb(L)$, so if the first condition is satisfied, the second condition is equivalent to $I \cup J \neq \Emb(L)$ for all $I \in \cI$ and $J \in \cJ$.) Next we choose an absolutely simple abelian variety $B$ over~$L$ with the property that $B_\sigma$ and~$B_\tau$ are not isogenous whenever $\sigma \neq \tau$ in $\Emb(L)$. This condition implies that the abelian variety $A = \Res_{L/k}\, B$ is simple over~$k$. With notation as above, the closed subschemes $X_\cI$, $X_\cJ$ of~$A$ then satisfy $\dim(X_\cI) + \dim(X_\cJ) = \dim(A)$, whereas $X_\cI + X_\cJ \neq A$.

Finally, we note that it is possible to make choices as above. For a concrete example, take $k = \bbQ$ and $L = \tilde{L} = \bbQ(\zeta_7)$ with $\zeta_7 = \exp(2\pi i/7)$. Number the complex embeddings of~$L$ in such a way that $\Gamma = \Gal(L/\bbQ)$ is generated by the cyclic permutation $(1\; 2\; 3\; 4\; 5\; 6)$, and take $\cI$ to be the $\Gamma$-orbit of $I = \{1,2,3\}$ and $\cJ$ the $\Gamma$-orbit of $\{1,3,5\}$. If $B$ is an elliptic curve over~$L$ that is not isogenous to any of its Galois conjugates, then $A = \Res_{L/\bbQ}\, B$ is a $6$-dimensional simple abelian variety over~$\bbQ$ and $X_\cI$, $X_\cJ \subset A$ are $3$-dimensional closed subvarieties with $\dim(X_\cI + X_\cJ) = 5$.
\end{remark}

\section{The addition map on arbitrary abelian varieties}\label{sec:AddArbitraryAV}

It is clear that Corollary~\ref{cor:dimZ}  does not extend as is to nonsimple abelian varieties. However, we shall show that the result remains valid under an additional hypothesis. We use the following notion that was introduced by Ran in~\cite{Ran}. 

\begin{definition}
Let $A$ be an abelian variety over a field~$k$. If $k=\kbar$, an irreducible closed subvariety~$X$ of~$A$ is said to be geometrically nondegenerate if for every quotient morphism $q \colon A \to A/B$ with $B\subset A$ an abelian subvariety, either $q(X)= A/B$ or $\dim\bigl(q(X)\bigr) = \dim(X)$. Equivalently, $X$ is geometrically nondegenerate if
\begin{equation}\label{eq:geomnondeg}
\dim(X+B)= \min\bigl\{\dim(A),\dim(X)+\dim(B)\bigr\}
\end{equation}
for every abelian subvariety $B\subset A$ (compare with Corollary~\ref{cor:X1Xr}).

For arbitrary~$k$, we say that a closed subvariety $X\subset A$ is geometrically nondegenerate if all irreducible components of~$X_{\kbar}$ are geometrically nondegenerate.
\end{definition}

Note the connection between \eqref{eq:geomnondeg} and Corollary~\ref{cor:X1Xr}.

\subsection{Examples and elementary properties.} Assume $k=\kbar$.
\begin{enumerate}
\item If $A$ is simple, every  irreducible  closed subvariety of~$A$ is geometrically nondegenerate.

\item If $q\colon A \twoheadrightarrow A^\prime$ is a surjective homomorphism of abelian varieties and $X \subset A$ is a geometrically nondegenerate closed subvariety, its image $q(X) \subset A^\prime$ is again geometrically nondegenerate.

\item An irreducible curve $X \subset A$ is geometrically nondegenerate if and only if $X$ generates~$A$.

\item An irreducible hypersurface $X \subset A$ is geometrically nondegenerate if and only if $X$ is ample.

\item A dimensionally transverse intersection $X \subset A$ of ample hypersurfaces is geometrically nondegenerate.
\end{enumerate}
\medskip

The main result of this section is the following.

\begin{theorem}\label{thm:X+Y=A}
Let $A$ be an abelian variety of dimension~$g$ over a field~$k$. Let $X$ and~$Y$ be closed subvarieties of~$A$ with $\dim(X) + \dim(Y) \geq g$ and assume $X$ is geometrically nondegenerate. Then $X+Y = A$.
\end{theorem}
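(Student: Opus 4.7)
The plan is to reduce to $k$ algebraically closed and $X$, $Y$ irreducible with $\dim(X)+\dim(Y)=g$ (by replacing $Y$ with an irreducible subvariety of the right dimension; geometric nondegeneracy is defined componentwise), then induct on $g=\dim(A)$. The base case $g=0$ is trivial. When $A$ is absolutely simple, Corollary~\ref{cor:codim} applied to $X$ and $a-Y$ for each $a\in A$ gives $X\cap(a-Y)\neq\varnothing$ (their codimensions sum to $g$), whence $X+Y=A$. Assume now that $A$ is not absolutely simple and that the result holds in dimensions~$<g$. After translating $Y$, assume $0\in Y$, and set $B := \langle Y\rangle \subset A$, the smallest abelian subvariety of $A$ containing~$Y$.

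The crucial step is a fibre reduction when $B\subsetneq A$. Since $\dim(X)=g-\dim(Y)\geq g-\dim(B)=\dim(A/B)$, the g.n.d.\ property forces $q(X)=A/B$, where $q\colon A\to A/B$ is the quotient. For generic $\bar a\in A/B$, the fibre $X_{\bar a}:=X\cap q^{-1}(\bar a)$ is equidimensional of dimension $\dim(B)-\dim(Y)$. The key calculation is this: for every abelian subvariety $B''\subset B$, the identity $(X+B'')\cap q^{-1}(\bar a) = X_{\bar a}+B''$, combined with generic-fibre analysis of $q|_{X+B''}$ and the equality $\dim(X+B'')=\min(g,\dim(X)+\dim(B''))$ coming from g.n.d.\ of $X$, yields $\dim(X_{\bar a}+B'')=\min(\dim(B),\dim(X_{\bar a})+\dim(B''))$. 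This shows that $X_{\bar a}-\bar a\subset B$ is geometrically nondegenerate in $B$ (after selecting an appropriate irreducible component, or via a monodromy argument showing all components are translates/conjugates). Since $\dim(X_{\bar a}-\bar a)+\dim(Y)=\dim(B)<g$, the induction hypothesis applied to $B$ yields $(X_{\bar a}-\bar a)+Y=B$, hence $(X+Y)\cap(\bar a+B) = \bar a+B$ for generic $\bar a$. Closedness of $X+Y$ together with density of such cosets forces $X+Y=A$.

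Finally, it remains to handle $B=A$, i.e., $Y$ generates $A$. Set $Z:=X+Y$ and $S:=\Stab(Z)^0_{\red}$, and assume for contradiction $Z\neq A$, so $S\subsetneq A$. If $S\neq 0$, apply induction to $A/S$ with the g.n.d.\ subvariety $q_S(X)$ and $q_S(Y)$: the inequality $\dim(q_S(X))+\dim(q_S(Y))\geq \dim(A/S)$ follows from g.n.d.\ of $X$ (either $q_S(X)=A/S$ already, or $\dim(q_S(X))=\dim(X)$) together with the standard bound $\dim(q_S(Y))\geq \dim(Y)-\dim(S)$. This gives $q_S(X)+q_S(Y)=A/S$, i.e., $Z+S=A$; but $S\subset\Stab(Z)$ forces $Z+S=Z$, contradicting $Z\neq A$. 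The main obstacle is the remaining case $S=0$, where $Z$ has finite stabiliser and the induction does not directly reduce the ambient dimension. I expect to resolve it by a tangent-space argument in the spirit of Prasad's proof~\cite{Prasad}: a positive-dimensional generic fibre of $\sigma\colon X\times Y\to Z$ produces, via projection, an irreducible curve $C\subset X$ whose tangent at every point lies in the fixed proper subspace $T_{Z,z}\subsetneq T_{A,0}$, and the g.n.d.\ of $X$ should preclude the existence of such a curve, playing here the role that simplicity of $A$ plays in Prasad's original argument.
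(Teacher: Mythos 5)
Your approach is genuinely different from the paper's. The paper proves Theorem~\ref{thm:X+Y=A} by running essentially the same perverse-sheaf argument as for Theorem~\ref{thm:semismall}: assuming $Z \neq A$, the induction hypothesis gives $q(Z) = A/B$ for every nonzero abelian subvariety~$B$, and Propositions~\ref{prop:chiP=0} and~\ref{prop:dimStab>0} give every simple subquotient~$P_0$ of ${}^{\mathsf{p}}H^i(K_0 * L_0)$ (for $i \neq 0$) a positive-dimensional stabiliser~$B$, so that $\Supp(P)$ is $B$-saturated and therefore a \emph{proper} closed subset of~$Z$; shrinking to the complement of these supports, the convolutions become perverse and the semismallness calculation of \ref{subsec:pfmaincalc} forces $\dim Z = g$, a contradiction. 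This works uniformly, regardless of whether $Z$ itself has a positive-dimensional stabiliser. You instead try to replace the perverse-sheaf input entirely by a geometric dimension-reduction, which is an interesting idea but leaves the hard core untouched.

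The decisive gap is the one you flag yourself: the subcase where $\langle Y\rangle = A$ and $S = \Stab(Z)^0_{\red} = 0$. Your proposed resolution is a tangent-space argument \`a la Prasad, with geometric nondegeneracy of~$X$ replacing simplicity of~$A$. This cannot work in positive characteristic: the example in Section~\ref{sec:background} of the paper produces, for any simple abelian variety~$B$ over~$\Fpbar$, a regular locally closed curve $C' \subset B$ all of whose tangent lines lie in a fixed proper subspace $\Image(T_{q,0}) \subsetneq T_{B,0}$, where $q$ is a purely inseparable isogeny. Taking $C$ to generate~$A$, the curve $C'$ generates~$B$ and hence is geometrically nondegenerate (as $B$ is simple). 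So geometric nondegeneracy does not preclude the existence of such curves, and a tangent-space version of Prasad's Lemma~1 with ``simple'' replaced by ``g.n.d.'' is false in characteristic~$p$. Since the $S=0$ case is exactly where the perverse-sheaf input does irreplaceable work, your plan as written does not close.

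There is also a secondary gap in the $B = \langle Y\rangle \subsetneq A$ reduction. You observe that for generic $\bar a$ the fibre $X_{\bar a}$ satisfies $\dim(X_{\bar a} + B'') = \min\bigl(\dim B, \dim X_{\bar a} + \dim B''\bigr)$ for every abelian subvariety $B'' \subset B$, but the definition of geometric nondegeneracy is componentwise, and $X_{\bar a}$ need not be irreducible; the displayed equality for the union does not by itself give it for some fixed irreducible component. The monodromy argument you gesture at (the generic fibre $X_\eta$ of $q|_X$ is irreducible over $\kappa(\eta)$, so its geometric components are Galois conjugate and share any $\Gal$-equivariant numerical invariant) is the right idea, but one has to be careful that the conjugacy is over a field that sees $B''$; this needs to be spelled out. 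In addition, one must choose~$\bar a$ generic simultaneously for all~$B''$, which requires a word since there may be infinitely many abelian subvarieties of~$B$.

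In summary: the $B \subsetneq A$ and $S \neq 0$ branches of your plan are plausibly patchable, but the $S = 0$ branch needs a genuinely new idea — precisely the perverse-sheaf input of Section~\ref{sec:ConvPerv} — and the tangent-space route is a dead end in positive characteristic for the reasons the paper itself explains.
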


\begin{proof}
By the same argument as in~\ref{subsec:k=CFpbar}, we may assume that either $k=\bbC$ or that $k$ is the algebraic closure of a field~$k_0$ which is finitely generated over~$\bbF_p$. To avoid case distinctions, define $k_0 = k = \bbC$ in the first case. Moreover, we may assume that $X$ and~$Y$ are irreducible with $\dim(X) + \dim(Y) = g$ (if necessary, replace~$Y$ by an irreducible closed subvariety), and that we have $X_0$, $Y_0 \subset A_0$ over~$k_0$ which after extension of scalars to~$k$ give $X$, $Y \subset A$. We follow the notational convention of Remark~\ref{rem:Notation0}. Let $Z_0 \coloneqq X_0 + Y_0$ (and so $Z = X+Y$). Our goal is to show  $Z=A$. We assume $Z \neq A$ and we shall derive a contradiction.

Suppose $B \subset A$ is an abelian subvariety with quotient $q \colon A \twoheadrightarrow \ol{A} = A/B$ and write $\ol{X} \coloneqq q(X)$ and $\ol{Y} \coloneqq q(Y)$. The assumption that $X$ is geometrically nondegenerate implies  $\dim(\ol{X}) + \dim(\ol{Y}) \geq \dim(\ol{A})$. By induction on~$g$, we may therefore assume  $q(Z) = \ol{X} + \ol{Y} = \ol{A}$ whenever $B \neq 0$.

As we did in~\ref{subsec:goodopens}, we choose affine open subsets $U_{X_0} \hookrightarrow X_0$ and $U_{Y_0} \hookrightarrow Y_0$ such that $(U_X)_{\red}$ and~$(U_Y)_{\red}$ are smooth over~$k$, and we let $\Delta_{X_0} \hookrightarrow X_0$ and $\Delta_{Y_0} \hookrightarrow Y_0$ be the closed complements. Define
\[
K_0 \coloneqq \IC_{X_0}|_{\Delta_{X_0}}[-1]\, ,\qquad L_0 \coloneqq \IC_{Y_0}|_{\Delta_{Y_0}}[-1]\, ,
\]
which are perverse sheaves. To prove the theorem, it suffices to show that there exists a dense open subset $Z_0^\prime \subset Z_0$ such that the restriction of each of the objects
\begin{equation}\label{eq:fourterms}
\IC_{X_0} * \IC_{Y_0}\, ,\quad K_0 * \IC_{Y_0}\, ,\quad \IC_{X_0} * L_0\, ,\quad K_0 * L_0  
\end{equation}
to~$Z_0^\prime$ is a perverse sheaf. Let $(X\times Y)^\prime \subset X\times Y$ be the pre-image of~$Z^\prime$ under the addition map $X \times Y \to Z$. Note that $(X\times Y)^\prime$, being an open subset of the irreducible variety $X\times Y$, is irreducible. Exactly the same argument as in~\ref{subsec:pfmaincalc} then shows that the map $(X\times Y)^\prime \to Z^\prime$ (which is a proper surjective morphism) is semismall. In particular, this implies $g = \dim(X\times Y) = \dim(Z)$, contradicting the assumption $Z \neq A$. 

To find an open subset $Z_0^\prime \subset Z_0$ as desired, we give the argument for $K_0 * L_0$; exactly the same works with $K_0$ replaced by~$\IC_{X_0}$ or $L_0$ replaced by~$\IC_{Y_0}$. (For each of the terms in~\eqref{eq:fourterms}, we find an open subset $Z_0^\prime$ as desired, and at the end, we take the intersection of the four open subsets~$Z_0^\prime$ that were found.) By Lemma~\ref{lem:SSPervFq}\ref{Isotyp/k1}, we may,  after replacing~$k_0$ by a finite extension, assume that every simple subquotient~$P_0$ of $K_0 * L_0$ has the property that~$P$ (the pullback of~$P_0$ to~$A$) is isotypical. By Propositions~\ref{prop:chiP=0} and~\ref{prop:dimStab>0}, it follows that for every simple subquotient~$P_0$ of ${}^{\mathsf{p}}H^i(K_0 * L_0)$ with $i\neq 0$, we have $\dim\bigl(\Stab(P)\bigr) > 0$, so that the identity component $B = \Stab(P)^0$ is a nonzero abelian subvariety of~$A$. If $\Sigma(P_0) \subset A_0$ is the closure of the support of~$P_0$, then $\Sigma \coloneqq \Sigma(P_0) \otimes_{k_0} k$ is stable under the action of~$B$. Writing $q \colon A \to \ol{A} = A/B$ for the quotient map and $\ol\Sigma \coloneqq q(\Sigma)$, it follows that $\Sigma = q^{-1}(\ol\Sigma)$. Moreover, $\Sigma \subset Z \subsetneq A$, whereas $q(Z) = \ol{A}$, so we cannot have $\Sigma = Z$. This shows that $P_0$ is supported on a proper closed subset of~$Z_0$. Now take $Z_0^\prime \subset Z_0$ to be the complement of the union of all $\Sigma(P_0) \subsetneq Z_0$, where $P_0$ runs through the simple subquotients of ${}^{\mathsf{p}}H^i(K_0 * L_0)$ with $i\neq 0$.
\end{proof}

\begin{remark}\label{rem:remplir}
The argument actually proves something slightly stronger: we have $X+Y = A$ whenever, with notation as above, $\dim(\ol{X}) + \dim(\ol{Y}) \geq \dim(\ol{A})$ for every quotient $q \colon A \twoheadrightarrow \ol{A} = A/B$. This condition means precisely that $(X,Y)$ `fills up~$A$', in the sense of \cite[(1.10)]{Debarre-Conn}. Thus we have the analogue of \cite[Corollaire~2.6]{Debarre-Conn} over an arbitrary base field.
\end{remark}

\begin{remark}\label{rem:ssmall}
In the situation of the theorem, assuming instead $\dim(X) + \dim(Y) \leq g$, we do not know whether the addition map $X\times Y\to X+Y$ is semismall (compare with Theorem~\ref{thm:semismall}).
\end{remark}

As a corollary to Theorem~\ref{thm:X+Y=A}, we obtain the promised generalisation of Corollary~\ref{cor:dimZ}.

\begin{corollary}\label{cor:X1Xr}
Let $X_1,\ldots,X_r$ be closed subvarieties of an abelian variety~$A$ of dimension~$g$  over a field. Assume that at least $r-1$ of the~$X_i$ are geometrically nondegenerate. Then $\dim(X_1+\cdots+X_r) = \min\bigl\{g,\sum_{i=1}^r\, \dim(X_i) \bigr\}$.
\end{corollary}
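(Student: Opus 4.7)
My plan is to argue by induction on $r$, reducing the general case to $r=2$, which I then treat by adapting the strategy used to prove Theorem~\ref{thm:X+Y=A}.

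The inductive step from $r-1$ to $r$ is straightforward: after relabelling I may assume $X_1$ is among the $r-1$ geometrically nondegenerate summands, so that the inductive hypothesis applied to $X_2,\ldots,X_r$ (which still contains at least $(r-1)-1$ nondegenerate members) computes $\dim(X_2+\cdots+X_r)$; the $r=2$ case applied to $X_1$ and $W\coloneqq X_2+\cdots+X_r$ then yields the conclusion via the identity $\min\{g,a+\min\{g,b\}\} = \min\{g,a+b\}$. The content thus lies in the case $r=2$: take $X,Y\subset A$ closed with $X$ geometrically nondegenerate; by the reductions of Remark~\ref{rem:ThmGeometric} and~\ref{subsec:k=CFpbar} I may assume $k$ is algebraically closed, both subvarieties are irreducible, and in positive characteristic that we are in the setting of~\ref{charp}. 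Write $Z\coloneqq X+Y$. The inequality $\dim(Z)\leq\min\{g,\dim X+\dim Y\}$ is automatic; if $\dim X+\dim Y\geq g$, Theorem~\ref{thm:X+Y=A} gives $Z=A$ directly. Otherwise, assuming for contradiction that $\dim Z<\dim X+\dim Y$, I argue by induction on $g=\dim A$: the goal is to produce a dense open $Z'\subset Z$ such that $\sigma^{-1}(Z')\to Z'$ is semismall; being surjective, it is then generically finite, forcing $\dim Z=\dim X+\dim Y$ and contradicting the assumption.

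The technical heart is to find such a $Z_0'$ so that each of the four convolutions $\IC_{X_0}*\IC_{Y_0}$, $K_0*\IC_{Y_0}$, $\IC_{X_0}*L_0$, $K_0*L_0$ (with $K_0,L_0$ as in~\ref{subsec:goodopens}) restricts perversely over~$Z_0'$, so that the computation of~\ref{subsec:pfmaincalc} carries over verbatim. Following the proof of Theorem~\ref{thm:X+Y=A}, I must show that every simple subquotient $P_0$ of ${}^{\mathsf{p}}H^i(\mathcal{C}_0)$, for $\mathcal{C}_0$ one of these four convolutions and $i\neq 0$, has support $\Sigma\subsetneq Z$. After enlarging $k_0$, its pullback $P$ is isotypical, and Propositions~\ref{prop:chiP=0} and~\ref{prop:dimStab>0} produce a nonzero abelian subvariety $B\coloneqq\Stab(P)^0\subset A$, so that $\Sigma$ is $B$-stable. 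The crucial new element compared with the proof of Theorem~\ref{thm:X+Y=A} enters here: supposing for contradiction that $\Sigma=Z$, the subvariety $Z$ is $B$-stable, so $\dim Z = \dim q(Z) + \dim B$ where $q\colon A\twoheadrightarrow \ol A = A/B$. Since $q(X)$ remains geometrically nondegenerate in $\ol A$ and $\dim\ol A<g$, the inductive instance of Corollary~\ref{cor:X1Xr} in $\ol A$ applies and gives $\dim q(Z) = \min\{\dim\ol A,\dim q(X)+\dim q(Y)\}$, while $\dim q(X)=\min\{\dim\ol A,\dim X\}$ by nondegeneracy, and $\dim q(Y)\geq \dim Y-\dim B$ because fibres of $Y\to q(Y)$ lie in cosets of~$B$. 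A short case analysis on whether $q(X)$ or $q(Y)$ fills up $\ol A$ then forces either $q(Z)=\ol A$, whence $Z=A$ contradicting $\dim Z<g$, or $\dim Z\geq\dim X+\dim Y$, contradicting the standing assumption. Hence $\Sigma\subsetneq Z$, and taking $Z_0'$ to be the complement of the union of all such supports completes the construction.

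The main obstacle I anticipate is the case analysis at the $\Sigma=Z$ step: the interaction between the nondegeneracy of $X$, the bound $\dim q(Y)\geq\dim Y-\dim B$, and the minimum appearing in the inductive instance of Corollary~\ref{cor:X1Xr} in $\ol A$ must be unpacked carefully, and one must simultaneously exploit both $\dim X+\dim Y<g$ and $\dim Z<\dim X+\dim Y$. Organising the overall induction so that Theorem~\ref{thm:X+Y=A} and the inductive instances of Corollary~\ref{cor:X1Xr} in lower-dimensional ambient abelian varieties are available at the same time also requires a little bookkeeping.
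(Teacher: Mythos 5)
Your proof is correct in outline, and the case analysis at the $\Sigma = Z$ step is sound: when $\dim q(X)+\dim q(Y)\geq\dim\ol A$, the inductive instance of the corollary in $\ol A$ forces $q(Z)=\ol A$ and hence $Z=A$, contradicting $\dim Z<g$; when $\dim q(X)+\dim q(Y)<\dim\ol A$, nondegeneracy gives $\dim q(X)=\dim X$, and combined with $\dim q(Y)\geq\dim Y-\dim B$ and $\dim Z=\dim q(Z)+\dim B$ one gets $\dim Z\geq\dim X+\dim Y$, again a contradiction. However, this route is considerably heavier than the paper's, which bypasses the perverse-sheaf machinery entirely at this stage: the paper simply chooses an irreducible curve $C\subset A$ generating~$A$, observes that $\dim(V+C)=\dim(V)+1$ for every proper closed $V\subsetneq A$, and sets $Y'\coloneqq Y+nC$ with $n=g-\dim(X)-\dim(Y)$ so that $\dim(X)+\dim(Y')=g$. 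Theorem~\ref{thm:X+Y=A} then yields $X+Y+nC=A$, and since adding $nC$ raises dimension by at most~$n$, this forces $\dim(X+Y)\geq g-n=\dim(X)+\dim(Y)$. That padding trick reduces the whole corollary to a single application of Theorem~\ref{thm:X+Y=A} with no induction on~$g$ and no need to revisit the supports of the perverse cohomology sheaves. Your approach essentially re-proves a strengthened form of Theorem~\ref{thm:X+Y=A} in the regime $\dim X+\dim Y<g$; it is valid, but you carry along all the bookkeeping of the spreading-out, the dense-open construction, and the semismallness computation of~\ref{subsec:pfmaincalc}, which the paper's one-line reduction makes unnecessary. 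If you pursue your route, you should also spell out that the finite union $\bigcup\Sigma(P_0)$ is a proper closed subset of the irreducible $Z$, and make explicit the double induction (on $g$ and, inside the semismallness computation, on the total dimension of the pair of subvarieties) so that the appeal to~\ref{subsec:pfmaincalc} is justified.
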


\begin{proof} 
It suffices to prove this for $r=2$, working over an algebraically closed base field. Let $X$ and~$Y$ be closed subvarieties of~$A$ such that $X$ is geometrically nondegenerate. When $\dim(X) + \dim(Y) \ge g$, the theorem gives $X+Y=A$, whence the corollary in that case. Assume now $\dim(X) + \dim(Y) = g-n$ for some $n>0$. Let $C \subset A$ be an irreducible closed curve that generates~$A$. For every closed subvariety $V \subsetneq A$, we have $\dim(V+C) = \dim(V) + 1$. Let $Y^\prime = Y + nC$, where $nC = C + \cdots + C$ ($n$~terms). Then $\dim(X) + \dim(Y^\prime) = g$, so the theorem gives $\dim(X+Y+nC) = g$ and hence $\dim(X+Y) = g-n$.
\end{proof}

{\small

} 
\bigskip

\noindent
Olivier Debarre, \texttt{olivier.debarre@imj-prg.fr}

\noindent
Universit\'e Paris Cit\'e and Sorbonne Universit\'e, CNRS, IMJ-PRG, F-75013 Paris, France
\medskip

\noindent
Ben Moonen, \texttt{b.moonen@science.ru.nl}

\noindent
Radboud University Nijmegen, IMAPP, Nijmegen, The Netherlands

\end{document}